\theoremstyle{plain}
\newtheorem{theorem}{Theorem}[section]
\newtheorem{lemma}[theorem]{Lemma}
\newtheorem{corollary}[theorem]{Corollary}
\theoremstyle{definition}
\theoremstyle{remark}
\newtheorem{remark}[theorem]{Remark}
\title{\bf On Weak Hamiltonicity of a Random Hypergraph}
\author{Daniel Poole\thanks{The author gratefully acknowledges support from NSF grant \# DMS-1101237.}\\
\small Department of Mathematics\\[-0.8ex]
\small The Ohio State University\\[-0.8ex] 
\small Columbus, Ohio, U.S.A.\\
\small\tt poole@math.osu.edu
}
\begin{document}

\maketitle


\begin{abstract}
A {\it weak (Berge) cycle} is an alternating sequence of vertices and (hyper)edges $C=(v_0, e_1, v_1, \ldots, v_{\ell-1}, e_\ell, v_{\ell}=v_0)$ such that the vertices $v_0, \ldots, v_{\ell-1}$ are distinct with $v_k, v_{k+1} \in e_{k}$ for each $k$, but the edges $e_1, \ldots, e_\ell$ are not necessarily distinct. We prove that the main barrier to the random $d$-uniform hypergraph $H_d(n,p),$ where each of the potential edges of cardinality $d$ is present with probability $p$, developing a weak Hamilton cycle is the presence of isolated vertices. In particular, for $d \geq 3$ fixed and $p=(d-1)! \frac{\ln n + c}{n^{d-1}}$, the probability that $H_d(n, p)$ has a weak Hamilton cycle tends to $e^{-e^{-c}}$, which is also the limiting probability that $H_d(n,p)$ has no isolated vertices. As a consequence, the probability that the random hypergraph $H_d(n, m=\frac{n(\ln n + c)}{d}),$ where $m$ potential edges are chosen uniformly at random to be present, is weak Hamiltonian also tends to $e^{-e^{-c}}$.

  \bigskip\noindent \textbf{Keywords:} random hypergraphs; hamilton cycles
\end{abstract}

\section{Introduction}

A $d$-uniform hypergraph is a pair $(V, E)$ of vertices $V$ and (hyper)edges $E$, where $E \subseteq { V \choose d}$. Let $H_d(n,p)$ denote the random $d$-uniform hypergraph with vertex set $[n]:=\{1, 2, \ldots, n\}$, where each of the ${n \choose d}$ potential edges is present with probability $p$, independently of all other potential edges. The similar model $H_d(n, m)$ is a random $d$-uniform hypergraph on $[n]$ where $m$ edges are chosen uniformly at random among all sets of $m$ potential edges.  For $d=2$, these models are the typical random graph models, $G(n,p)$ and $G(n,m).$ 

As customary, we say that for a given $p=p(n)$ ($m$ resp.) some graph property holds {\it with high probability}, denoted {\it w.h.p.}, if the probability that $H_d(n,p)$ ($H_d(n,m)$ resp.) has this property tends to 1 as $n \to \infty$.

Existence problems of Hamilton cycles have a rich history in random graphs. First posed by Erd\H{o}s and R\'{e}nyi \cite{ER61a}, even a correct threshold of Hamiltonicity resisted the efforts of researchers until a breakthrough by P\'{o}sa \cite{posa} and Korshunov \cite{korshunov}, who found that w.h.p. $G(n,m=c n \ln n)$ has a Hamilton cycle if $c$ is sufficiently large. Later Korshunov \cite{korshunov2}, Koml\'{o}s and Szemer\'{e}di \cite{komlos}, and Bollob\'{a}s \cite{bol83} established the sharp theshold for Hamiltonicity as well as the probability of Hamiltonicity within the narrow window enclosing the critical $p$ (resp. $m$) in $G(n,p)$ (resp. $G(n,m)$); in particular, if $p=\frac{\ln n + \ln \ln n + c}{n}$, then the probability that $G(n,p)$ is Hamiltonian tends to $e^{-e^{-c}}$. The probability that each vertex in $G(n,p)$ has degree at least 2, a trivial necessary condition for Hamiltonicity, also tends to this limit.  As a culmination point, Bollob\'{a}s \cite{Bol 84} showed w.h.p. a Hamilton cycle is born at the first moment that the random graph process, where edges are added uniformly at random one after another, has minimum degree at least $2$. 

As for hypergraphs, the first problem that arises is how to even define a cycle. There has been many fine results on the so-called $\ell$-overlapping cycles. See K\"{u}hn and Osthus~\cite{kuhn} for a great survey of results, and in particular, see Dudek and Frieze~\cite{dudek tight}, where they establish the threshold values for the appearance of many of these cycles. However, we use a different twist on the classical notion of a hypergraph cycle, defined by Berge in~\cite{berge}.

For $\ell \geq 3$, a {\it (Berge) cycle, $C=(v_0, e_1, v_1, \ldots, e_\ell, v_\ell=v_0),$ of length $\ell$} is an alternating sequence of vertices and edges such that $v_1, \ldots, v_{\ell-1}$ are distinct, $e_1, \ldots, e_\ell$ are distinct, and $v_{i-1},v_i \in e_i.$ A {\it weak} cycle is defined similarly except we do not require the edges to be distinct; see Figure \ref{fig: weak H cycle} for an example. For the graph case $(d=2)$, these edges must be distinct, and thus weak cycles are cycles. In proper hypergraphs ($d \geq 3$), one crucial difference between cycles and weak cycles is that a vertices with degree 1 can be in weak cycles, but necessarily these  vertices can not be in cycles.

\begin{figure}[ht]
\centering
\begin{tikzpicture}[scale=.6]
\node (v1) at (2,2) {};
\node (v2) at (4, 2) {};
\node (v3) at (6, 0) {};
\node (v4) at (4, -2) {};
\node (v5) at (2, -2) {};
\node (v6) at (0, -1) {};
\node (v7) at (0, 1) {};
\node (v8) at (6, 2.5) {};
\node (v9) at (-1,-1) {};
\node (v10) at (-2,-.5) {};
\node (v11) at (-.5, -1.9) {};
\draw[dashed] (2,2) -- (4,2) -- (6, 0) -- (4, -2) -- (2, -2) -- (0, -1) -- (0, 1) -- (2, 2);
\begin{scope}[fill opacity=0]
\draw plot [smooth cycle] coordinates {($(v6) +(0.2,-.2)$) ($(v11)+(.2,-.2)$) ($(v11)+(-.2,-.1)$) ($(v9)+(-.2,-.2)$) ($(v10)+(-.2, -.2)$) ($(v10)+(-.05,.2)$) ($(v10)+(.5, .3)$) ($(v6)+(.2,.2)$)};
\draw plot [smooth cycle, tension=.2] coordinates {($(v1)+(-.2,.3)$) ($(v2)+(.2,.3)$) ($(v4)+(.2,-.3)$) ($(v5)+(-.2,-.3)$)};
\draw plot [smooth cycle, tension=.5] coordinates {($(v2)+(-.2,.2)$) ($(v2)+(1,.1)$) ($(v8)+(.2,.2)$) ($(v3)+(.2,0)$) ($(v4)+(-.2,-.2)$)};
\draw plot [smooth cycle, tension=.4] coordinates {($(v1)+(.2,.2)$) ($(v5)+(.2,-.2)$) ($(v6)+(-.1,-.1)$) ($(v7)+(-.1,.1)$)};
\end{scope}
\fill (v1) circle (0.1);
\fill (v2) circle (0.1);
\fill (v3) circle (0.1);
\fill (v4) circle (0.1);
\fill (v5) circle (0.1);
\fill (v6) circle (0.1);
\fill (v7) circle (0.1);
\fill (v8) circle (0.1);
\fill (v9) circle (0.1);
\fill (v10) circle (0.1);
\fill (v11) circle (0.1);
\end{tikzpicture}
\caption{A weak cycle (dashed curve) in a $4$-uniform hypergraph}
\label{fig: weak H cycle}
\end{figure}
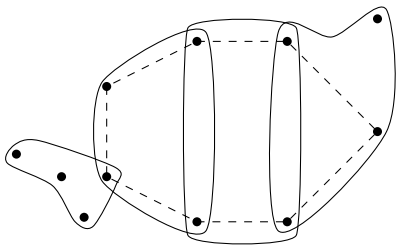

We'll say that the weak cycle $C$ {\it spans} the vertices $v_0, v_1, \ldots, v_{\ell-1}$; note that the edges making up $C$ possibly contain more than just the spanned vertices (again, see Figure \ref{fig: weak H cycle}). A {\it weak Hamilton cycle} is a cycle spanning the entire vertex set, and a hypergraph is {\it weak Hamiltonian} if it contains a weak Hamilton cycle. 

Trivially, each vertex in a weak Hamiltonian hypergraph must have degree at least 1. However, we found that the main barrier to weak Hamiltonicity in $H_d(n,p)$ is this ``local" obstruction about whether or not this hypergraph has this trivial necessary condition.

\begin{theorem}\label{theorem: 10.6.1}
\textbf{(i)} For $d \geq 3$ and $p=(d-1)! \tfrac{\ln n + c_n}{n^{d-1}}$, where $c_n \to c \in \mathbb{R}$,
\begin{align*}
P(H_d(n,p) \text{ is weak Hamiltonian}) = P(\min \mathrm{deg}(H_d(n,p)) \geq 1)+o(1) \to e^{-e^{-c}}.
\end{align*}
\textbf{(ii)} If $m=\tfrac{n}{d} \left( \ln n + c_n\right),$ then
\begin{align*}
P(H_d(n,m) \text{ is weak Hamiltonian}) & = P(\min \mathrm{deg}(H_d(n,m)) \geq 1)+o(1)\to e^{-e^{-c}}.
\end{align*}
Consequently, for any $c_n \to -\infty$, w.h.p $H_d(n,p)$ and $H_d(n,m)$ are not weak Hamiltonian and for any $c_n \to \infty$, w.h.p. $H_d(n,p)$ and $H_d(n,m)$ are weak Hamiltonian. 
\end{theorem}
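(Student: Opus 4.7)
The plan is to pass to the \emph{2-shadow graph} $\partial H$ of $H=H_d(n,p)$, defined on vertex set $[n]$ with $\{u,v\}\in E(\partial H)$ iff some hyperedge of $H$ contains both $u$ and $v$. A weak Berge Hamilton cycle in $H$ is exactly a Hamilton cycle in $\partial H$: the ``weak'' condition permits reusing the same hyperedge as a witness for consecutive cycle-edges, so any cycle in $\partial H$ lifts to a weak cycle in $H$, and conversely every weak cycle projects to a cycle in $\partial H$. Furthermore, $H$ has no isolated vertex iff $\partial H$ does. Hence the theorem reduces to (a) $P(\delta(H_d(n,p))\geq 1)\to e^{-e^{-c}}$, and (b) conditional on $\delta(H)\geq 1$, $\partial H$ is w.h.p.\ Hamiltonian. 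Part (ii) then follows from (i) by the standard transfer between $H_d(n,p)$ and $H_d(n,m)$ at their common threshold.

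Part (a) is a routine Poisson-paradigm argument: the expected number of isolated vertices is $n(1-p)^{\binom{n-1}{d-1}}\to e^{-c}$, and the $k$-th factorial moment equals $(n)_k(1-p)^{\binom{n}{d}-\binom{n-k}{d}}\to e^{-kc}$, matching $\mathrm{Poisson}(e^{-c})$.

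For (b) I would use P\'osa's rotation-extension with a two-round exposure. Split $p=p_1+p_2-p_1p_2$ with $p_2=(d-1)!\,\omega(n)/n^{d-1}$ for a slowly growing $\omega(n)\to\infty$, so $H_d(n,p)=H_1\cup H_2$ with $H_i\sim H_d(n,p_i)$ independent. In the first round, establish that $\partial H_1$ is an expander in the P\'osa sense: every $S\subseteq[n]$ with $|S|\leq n/4$ that avoids a small ``bad'' set $B$ satisfies $|N_{\partial H_1}(S)\setminus S|\geq 2|S|$. Here $B$ consists of vertices of hypergraph degree at most $1$, which w.h.p.\ has size $O((\log n)^2)$; crucially, since $d\geq 3$, any vertex of hypergraph degree $\geq 1$ has shadow degree $\geq d-1\geq 2$, so P\'osa's boundary condition holds at every non-isolated vertex. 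The expansion estimate is proved by reasoning directly in the hypergraph: if $|N_{\partial H_1}(S)\cup S|\leq 3|S|$, then every hyperedge meeting $S$ must lie inside the candidate neighborhood, an event whose probability is bounded by a standard union bound over $S$ and over candidate neighborhoods.

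By P\'osa's lemma, any such expander that is not Hamiltonian admits $\Omega(n^2)$ \emph{booster} pairs $\{u,v\}$ whose addition either extends a longest path or closes a path into a cycle. A given booster lies in $\partial H_2$ with probability $1-(1-p_2)^{\binom{n-2}{d-2}}=\Theta(\omega/n)$, so the expected number of boosters ``hit'' by $H_2$ is $\Omega(n\omega)\to\infty$; iterating the sprinkling $O(\log n)$ times then yields a Hamilton cycle w.h.p. The main obstacle will be the expansion estimate, because shadow edges sharing a common hyperedge are positively correlated and so $\partial H_1$ is not a product-measure random graph. The remedy is the one indicated above: reason about the presence or absence of hyperedges (which are independent under the model) rather than about shadow edges themselves, so the dependence never enters the union bound.
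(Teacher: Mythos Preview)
Your shadow-graph reduction is correct and is really a restatement of what the paper does implicitly: the paper's neighborhoods $N(\cdot)$, its P\'osa lemma, and its non-expanding sets are already shadow-graph notions, so the overall skeleton (expansion $\Rightarrow$ large P\'osa sets $\Rightarrow$ many boosters $\Rightarrow$ sprinkle) is the same. Two points, however, are genuine gaps rather than matters of taste.

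First, the sprinkling bookkeeping is off. Knowing that $H_2$ hits $\Omega(n\omega)$ boosters \emph{of the initial configuration} does not extend the longest path by more than one: once a booster edge is added the booster set changes, so one must expose hyperedges (or small batches) sequentially and credit one extension per success. From expansion alone the initial path is only guaranteed length $\Omega(n)$, so $\Theta(n)$ successful steps are needed, not ``$O(\log n)$ iterations.'' The paper handles this by first proving a hypergraph de~la~Vega theorem (a path of length $n-O(n/\ln n)$) and then sprinkling in $\Theta(n/\ln n)$ increments of size $\Theta(\ln n/n^d)$.

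Second, and more seriously, conditioning on $\delta(H)\ge 1$ before the two-round split is incompatible with your choice of $p_2$. With $p_2=(d-1)!\,\omega(n)/n^{d-1}$ and $\omega\to\infty$, every vertex isolated in $H_1$ becomes non-isolated in $H_1\cup H_2$ with probability $1-e^{-\omega}\to 1$; on $\{\delta(H)\ge 1\}$ there will typically be vertices whose only incident edges come from $H_2$, and the Hamilton cycle must pass through them. But your expansion hypothesis and your bad set $B$ are defined from $H_1$, and you have no expansion guarantee for P\'osa endpoint sets containing such newly non-isolated vertices. This is exactly why the paper proves the stronger unconditional statement (w.h.p.\ there is a weak cycle spanning all non-isolated vertices) and uses a \emph{two-stage} sprinkling: a long first stage from $p_0$ to $p_1$ builds a cycle on $V_1(H_d(n,p_0))$, and then a very short second stage from $p_1$ to $p$, with $p-p_1=(\ln n)^3/n^d$, is thin enough that w.h.p.\ no isolated vertex awakens, yet still supplies $\Theta(\ln n)$ increments to absorb the $O(\ln n)$ vertices that awoke between $p_0$ and $p_1$. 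Without this device the argument does not close in the critical window $c_n\to c\in\mathbb{R}$.
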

First, since weak Hamiltonicity is an increasing graph property, part \textbf{(ii)} can be easily shown to follow from  \textbf{(i)} using a standard random graphs argument. As such, we omit the proof of \textbf{(ii)}.

\begin{remark}
This theorem appears to be in contrast with the Hamiltonicity results of $G(n,p)$ and $G(n,m)$, where the main barrier is the presence of vertices of degree less than 2. Although this seems like a behavioral difference between the graph and the hypergraph models, notice that a vertex in a hypergraph ($d \geq 3$) with degree at least 1 already has at least 2 neighbors; so in both random graphs and hypergraphs, the main barrier for weak Hamiltonicity is the presence of vertices with less than 2 neighbors. 
\end{remark}

\begin{remark}
Having found the threshold of weak Hamiltonicity of $H_d(n,m)$ and $H_d(n,p)$, we naturally wondered whether the $d$-uniform hypergraph process, where edges are added to $V=[n]$ uniformly at random one after another, satisfies a similar result. That is, we believe that w.h.p. the moment this process loses its last isolated vertex, denoted $\tau_d$, is also the moment that the hypergraph becomes weak Hamiltonian, denoted $T_d$. Trivially, $\tau_d \leq T_d$, and by our theorem, w.h.p. $T_d - \tau_d = o(n)$. We feel that our method here could be significantly refined to show that $T_d-\tau_d=O_p(\ln n)$, but by its nature will not be able to prove our suspected result that w.h.p. $\tau_d=T_d$. If true, this conjecture would generalize to hypergraphs the weak Hamilton version of Bollob\'{a}s'~\cite{Bol 84} hitting time result. 
\end{remark}

\subsection{Sketch of the proof}

We actually prove a stronger result than Theorem \ref{theorem: 10.6.1}, which says that for any $p$ in the critical range, $H_d(n,p)$ has as large a weak cycle as can be expected. 
\begin{theorem}
Let $c_n \to c$ and $p=(d-1)! \frac{\ln n + c_n}{n^{d-1}}$. W.h.p. $H_d(n,p)$ has a weak cycle spanning all non-isolated vertices.
\end{theorem}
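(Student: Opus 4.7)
The plan is to reduce the problem to Hamiltonicity of a graph and then apply P\'osa's rotation-extension technique. A sequence $(v_0, e_1, v_1, \ldots, e_\ell, v_\ell=v_0)$ is a weak cycle in $H$ spanning $\{v_0,\ldots,v_{\ell-1}\}$ precisely when $v_0 v_1 \cdots v_{\ell-1} v_0$ is a Hamilton cycle on that vertex set in the \emph{$2$-shadow graph} $G:=G(H)$, where $\{u,v\} \in E(G)$ iff some hyperedge of $H$ contains both $u$ and $v$. So it suffices to show that, w.h.p., $G(H_d(n,p))$ restricted to the set $U$ of non-isolated vertices of $H$ contains a Hamilton cycle on $U$.

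To run P\'osa's method on $G$, I would first partition $U$ into \emph{small} vertices (those of $H$-degree at most some large constant $k_0$) and \emph{large} vertices $L$. A standard first-moment computation shows that w.h.p.\ there are $o(n)$ small vertices and that they are well-spread (no two share a hyperedge, say). For every $S \subseteq L$ with $|S| \leq n/K$ (some constant $K$), I would then establish the P\'osa expansion estimate $|N_G(S)\setminus S| \geq 2|S|$; the key input is that each large vertex lies in $\Omega(\ln n)$ hyperedges whose other vertices are chosen independently enough that Chernoff plus a union bound over $S$ give the claim. A separate estimate ensures that $G[L]$ has an edge between any two disjoint linear-sized sets, which is what closes rotations into cycles.

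With expansion in hand, P\'osa's argument applied to $G[L]$ produces a Hamilton cycle $C$ on $L$ w.h.p.: take a longest path, rotate to obtain a linear set of possible endpoints, use the between-set edges of $G$ to close the path into a cycle, and extend by a swap if some vertex of $L$ is missed. The final step is to absorb each non-isolated small vertex $v$ into $C$. Since $v$ has $H$-degree at least $1$, its neighborhood in $G$ is a union of at most $k_0$ cliques of size $d-1$; inserting $v$ into $C$ requires two of these neighbors to be consecutive on $C$, so that edge can be broken and replaced by the two-edge detour through $v$. I would handle this by two-round exposure: write $p = p_1 + p_2$ with $p_2 = \Theta((\ln\ln n)/n^{d-1})$, run the earlier argument on $H_d(n,p_1)$ to produce $C$ on $L$ with a linear-sized reservoir of ``swap-ready'' consecutive pairs, and then use the independent $p_2$-hyperedges to supply the specific insertion opportunities for all small vertices simultaneously.

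The main obstacle is this absorption step. Unlike in the classical $G(n,p)$ Hamiltonicity proof, where every vertex has many candidate insertion positions, here a degree-$1$ vertex has its entire $G$-neighborhood confined to the $d-1$ other members of one particular hyperedge, so the constraint is extremely rigid and a naive union bound over small vertices is much too weak. The resolution is the sprinkling above combined with local cycle rearrangements: the $p_2$-round provides independent randomness that can be targeted to the few positions in $C$ compatible with each small vertex, and a handful of P\'osa-style swaps in $C$ allow absorbed vertices to be processed one by one without disturbing each other's insertion positions.
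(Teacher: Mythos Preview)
Your absorption step does not work as stated. Take a vertex $v$ with $H$-degree~$1$ in $H_d(n,p_1)$, lying in a single hyperedge $f$. To insert $v$ into a fixed Hamilton cycle $C$ on $L$ you need two $G$-neighbors of $v$ to be consecutive on $C$, and after the $p_2$-round the $G$-neighborhood of $v$ is $f\setminus\{v\}$ together with the $O((d-1)\ln\ln n)$ further vertices coming from $p_2$-hyperedges through $v$. The number of potential hyperedges containing $v$ together with both members of a fixed consecutive pair on $C$ is $\binom{n-3}{d-3}$, so summing over the $O(n)$ available pairs, the expected number of useful $p_2$-hyperedges for $v$ is $O\bigl(n\binom{n-3}{d-3}p_2\bigr) = O(\ln\ln n/n) = o(1)$. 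P\'osa-style swaps on $C$ do not rescue this either: if you break $C$, attach $v$ via $f$, and rotate from the far end to obtain a linear-size endpoint set $S$, closing the path now needs a hyperedge through $v$ and some $t\in S$; there are only $\Theta(n^{d-1})$ such hyperedges, the expected number of $p_2$-hits is $\Theta(\ln\ln n)$, and the resulting per-vertex failure probability $(\ln n)^{-\Theta(1)}$ is not small enough for a union bound over the $(\ln n)^{\Theta(1)}$ small vertices---especially since the same $p_2$-edges must serve them all. Nor can you raise $p_2$: already at $p_2 = \Theta(\ln\ln n/n^{d-1})$ there are $(\ln n)^{\Theta(1)}$ vertices isolated in $H_d(n,p_1)$ but not in $H_d(n,p)$, and these lie outside both $L$ and your set of small vertices, yet must also end up on the cycle.

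The paper avoids this entirely by never separating small and large vertices. It proves that \emph{every} non-expanding set of non-isolated vertices has size at least $n/3^d$, including sets containing degree-$1$ vertices---precisely the delicate range your expansion claim sidesteps by restricting to $L$. Consequently, for \emph{any} longest weak path (whose endpoints may well have degree $1$), double P\'osa rotation produces $\Omega(n^d)$ absent hyperedges whose addition closes the path into a weak cycle; sprinkling with increments $\Delta p = \Theta(\ln n/n^d)$ then hits one of these with probability $1-n^{-2}$ at each step, and $O(n/\ln n)$ steps cover the gap left by an initial de~la~Vega path of length $n - O(n/\ln n)$. A second, much shorter, sprinkling round handles the $O(\ln n)$ vertices that ceased to be isolated during the first round. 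The crucial point is that the $\Omega(n^d)$ count comes from the linear size of both P\'osa sets, not from the degree of any individual endpoint, so no step ever needs a hyperedge targeted at a specific low-degree vertex.
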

First, this theorem establishes the first equality in statement \textbf{(i)} of Theorem \ref{theorem: 10.6.1}. To get a handle on the probability that there are no isolated vertices, we have the following lemma. 
\begin{lemma}\label{lemma: converge in distribution}
Suppose $c_n \to c \in \mathbb{R}$. Let $X_n$ denote the number of isolated vertices in $H_d(n, p=(d-1)! \frac{\ln n + c_n}{n^{d-1}})$. Then $X_n$ converges in distribution to a Poisson random variable with mean $e^{-c}$. In particular, 
\begin{equation*}
P(H_d(n,p) \text{ has no isolated vertices}) \to e^{-e^{-c}}.
\end{equation*}
\end{lemma}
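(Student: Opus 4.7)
The plan is to prove Poisson convergence by the method of moments (i.e., Brun's sieve), computing the factorial moments $E[(X_n)_k]$ for each fixed $k \geq 1$ and showing they converge to $(e^{-c})^k$, which are the factorial moments of a Poisson$(e^{-c})$ random variable.

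First I would compute the probability that a fixed set $S \subseteq [n]$ of $k$ vertices is entirely isolated. The event is precisely that none of the $\binom{n}{d} - \binom{n-k}{d}$ potential edges meeting $S$ are present, so
\begin{equation*}
P(S \text{ all isolated}) = (1-p)^{\binom{n}{d} - \binom{n-k}{d}}.
\end{equation*}
A routine expansion gives $\binom{n}{d} - \binom{n-k}{d} = k \binom{n-1}{d-1}(1 + o(1))$ as $n \to \infty$ for fixed $k$, so with $p = (d-1)!\tfrac{\ln n + c_n}{n^{d-1}}$ we have $p\bigl[\binom{n}{d} - \binom{n-k}{d}\bigr] = k(\ln n + c_n)(1 + o(1))$. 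Since $p \to 0$, the estimate $(1-p)^N = e^{-pN(1 + O(p))}$ yields
\begin{equation*}
P(S \text{ all isolated}) = (1+o(1))\, n^{-k} e^{-k c}.
\end{equation*}

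Next I would sum over the $\binom{n}{k}$ choices of $S$, or equivalently over ordered $k$-tuples of distinct vertices, to get
\begin{equation*}
E[(X_n)_k] = (n)_k \cdot (1-p)^{\binom{n}{d} - \binom{n-k}{d}} = (1+o(1))\, n^k \cdot n^{-k} e^{-k c} \longrightarrow (e^{-c})^k.
\end{equation*}
The $k = 1$ case gives $E[X_n] \to e^{-c}$ directly. Since the limits $(e^{-c})^k$ match the factorial moments of Poisson$(e^{-c})$, the method of moments (Brun's sieve; see, e.g., Theorem 8.3.1 of Alon--Spencer) implies $X_n \xrightarrow{d} \mathrm{Poisson}(e^{-c})$. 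In particular, $P(X_n = 0) \to e^{-e^{-c}}$, which is the second claim of the lemma.

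There is no real obstacle here; the only point requiring care is the asymptotic expansion of $\binom{n}{d} - \binom{n-k}{d}$ and the replacement of $(1-p)^N$ by $e^{-pN}$, for which one must confirm that the error term $p^2 N = O((\log n)^2 / n)$ is negligible. Everything else is a direct application of Brun's sieve to a single class of "bad" events (a vertex being isolated), with the weak dependence between distinct vertices controlled by the fact that edges meeting two fixed vertices contribute only $O(n^{d-2})$ to the exponent, which does not affect the leading-order asymptotics of the factorial moments.
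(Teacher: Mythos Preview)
Your proposal is correct and is precisely the approach the paper indicates: the paper omits the proof and simply remarks that one computes the factorial moments of $X_n$ and shows $E[(X_n)_r] \sim (e^{-c})^r$ for each fixed $r$, which is exactly the argument you carry out. Your asymptotic estimates for $\binom{n}{d}-\binom{n-k}{d}$ and for $(1-p)^N$ are standard and correct, so there is nothing to add.
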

\begin{remark}
One can prove this lemma using standard techniques by computing the factorial moments of $X_n$ and finding that $E[(X_n)_r] \sim (e^{-c})^r$ for any fixed $r=1,2,\ldots.$ As such, the proof is omitted.
\end{remark}

To find this sharp threshold, we will follow a hypergraph version of Bollob\'{a}s' proof (see  \cite{BB}) that w.h.p. $G(n, p=\frac{\ln n + \ln \ln n + \omega}{n})$ is Hamiltonian with a few key differences. We outline the proof of the theorem here, hoping that it will help following the later arguments. 
\begin{enumerate}
\item  In section \ref{sec: de la vega}: Via an analogue of de la Vega's Theorem, it will be shown that there is likely a path of length $n-o(n)$ for $p$ sufficiently close to but less than the sharp threshold of connectivity. 
\item In section \ref{sec: posa}: We obtain a necessary condition for an extremal path starting at a fixed vertex, which actually says that the set of endpoints of all paths obtained via rotations is ``non-expanding." 
\item In section \ref{sec: nonexpanding}: Then, we show that all non-expanding sets are sufficiently large, which means that the set of endpoints of rotations of a longest path must be large as well. This will imply that for any such graph, there are sufficiently many non-present edges (on the order $n^d$) such that the addition of any one of these absent edges will connect two endpoints of a longest path creating a cycle. As long as this cycle is not isolated, we can break apart the cycle to create a longer path than before.
\item In section \ref{sec: increment}: By increasing $p$ in small increments, we will be assured of likely adding one of these beneficial edges from step (3). We keep increasing our edge probability until we finally end with a weak cycle on the set of non-isolated vertices. 
\end{enumerate}

The first, and most obvious, difference from Bollob\'{a}s proof, is that certain enumerative issues that are relatively simple for graphs become much more difficult when switching to hypergraphs. For instance, in the proof that all non-expanding sets must be large, we encounter the probability that each vertex of a set of $b$ vertices, denoted $B$, is adjacent to at least one of some $a$ vertices, denoted $A$, using only edges contained within $A \cup B$. For $G(n,p)$, this probability is precisely $(1-q^a)^b$, since for different vertices of $B$, the events that these vertices are adjacent to $A$ are independent. However, for the hypergraph case, these events are definitely not independent, and we introduce a greedy edge finding algorithm, which we analyze to establish a useful bound on this probability (that we feel is interesting in its own right). 

Second, we are also concerned with $p$ in the critical range, where the limiting probability that $H_d(n,p)$ is weak Hamiltonian is strictly between 0 and 1. In this range, there are possibly isolated vertices, and we need to be more careful about the lack of ``small" non-expanding sets. Further, when we increase our edge probability $p$ in small increments, we will end up being forced to do it in 2 large steps rather than just 1.

\section{Hypergraph Analogue of de la Vega's Theorem}\label{sec: de la vega}

Just prior to the typical window where the random hypergraph becomes connected, we will show that there is a path of length $n-o(n)$. An analogous result, established by Fernandez de la Vega~\cite{de la vega}, was used to determine the sharp threshold of Hamiltonicity in $G(n, p)$. Here, we prove a hypergraphic version of de la Vega's theorem, which in fact is established by the graph version after a key reduction argument. The following statement of de la Vega's Theorem is in Bollob\'{a}s~\cite{BB}.

\begin{theorem}[de la Vega's Theorem]\label{de la vega}
Let $\Theta= \Theta(n) \in \left(4 \ln 2 , \ln n - 3 \ln\ln n\right)$, then w.h.p. there is a path in $G(n, p = \Theta/n)$ of length at least $n \left( 1 - \frac{4 \ln 2}{\Theta} \right) $.
\end{theorem}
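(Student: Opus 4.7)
The plan is to follow the standard proof of de la Vega's theorem, which combines P\'{o}sa's rotation-extension lemma with a union bound argument. The intuition is that if $G(n,p)$ fails to contain a path of length $L$, then any maximum path $P$ must be ``sealed off'' in a rigid way: P\'{o}sa's lemma forces a large collection of non-edges incident to $V(P)$, and the probability of so many simultaneous non-edges is too small to survive the union bound.

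First I would recall P\'{o}sa's rotation lemma. Given a maximum path $P = v_0 v_1 \cdots v_\ell$ in $G$ and an edge $\{v_\ell, v_i\}$ with $i \leq \ell - 2$, the sequence $v_0 \cdots v_i v_\ell v_{\ell-1} \cdots v_{i+1}$ is another maximum path with the same initial endpoint and new terminal endpoint $v_{i+1}$. Iterating such rotations while keeping $v_0$ fixed yields a set $R \subseteq V(P)$ of reachable terminal endpoints; by maximality of $P$, no vertex of $R$ has a neighbor outside $V(P)$, so every pair in $R \times (V \setminus V(P))$ is forced to be a non-edge of $G$.

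Next I would set up the main estimate by union-bounding over candidate vertex sequences for $P$. The key quantitative input from P\'{o}sa's argument is the lower bound $|R| \geq c \ell$ for an explicit constant $c = c(\Theta)$, so the probability that a specific maximum path of length $\ell < L = n(1 - 4 \ln 2 / \Theta)$ arises together with its forced non-edges is bounded by
\[
n^{\ell+1}(1-p)^{|R|(n-\ell)} \leq \exp\bigl((\ell+1)\ln n - \tfrac{\Theta}{n}\,|R|(n-\ell)\bigr).
\]
Summing over $\ell < L$ and plugging in the lower bound on $|R|$ yields $o(1)$, so w.h.p.\ a path of length at least $L$ must exist.

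The hard part will be the quantitative analysis of P\'{o}sa's rotation argument, which is where the specific constant $4 \ln 2$ arises. A standard workaround is two-round exposure (sprinkling): write $(1 - p_1)(1 - p_2) = 1 - p$ so that $G(n, p_1) \cup G(n, p_2)$ has the same law as $G(n, p)$; use $G(n, p_1)$ to build a long initial path via greedy extension, then treat $G(n, p_2)$ as fresh independent randomness with which to perform rotations and grow $R$. Optimizing the split between $p_1$ and $p_2$, subject to the combinatorial constraint $|R| + |N_G(R)| \leq |V(P)|$ inherent in P\'{o}sa's lemma, produces precisely the constant $4\ln 2$ in the statement.
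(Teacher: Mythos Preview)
The paper does not prove this statement: de la Vega's Theorem is quoted verbatim from Bollob\'{a}s' textbook as a known result and used as a black box (the paper's own work begins with the hypergraph analogue, Lemma~\ref{hyper de la vega}). So there is no ``paper's own proof'' to compare against.

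That said, your sketch has a genuine gap in the main estimate. You propose to union-bound over all ordered vertex sequences of length $\ell+1$ that might serve as a longest path, contributing a factor $n^{\ell+1}$, and then multiply by $(1-p)^{|R|(n-\ell)}$. Two problems. First, $|R|$ is not a function of the sequence alone; it depends on the entire edge set of $G$, so the displayed product is not a bound on the probability of any single event in your union. Second, even granting $|R|\ge c\ell$ for some constant $c$, the exponent you get is of order $c\ell(n-\ell)\Theta/n$, which for $\ell$ close to $n$ is far too small to kill $(\ell+1)\ln n$ coming from $n^{\ell+1}$. The union over all paths is simply too large.

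The way the standard proofs (de la Vega, or the P\'{o}sa-based version in Bollob\'{a}s) get around this is to union-bound not over paths but over the \emph{witnessing non-expanding structure}: if the longest path has $\ell+1$ vertices, P\'{o}sa's lemma produces a set $R$ with $R\cup N(R)$ contained in those $\ell+1$ vertices and $|N(R)|<2|R|$. One then bounds the probability that \emph{some} set of the appropriate size has all its neighbours confined to a set of size $\ell+1$; the count here is $\binom{n}{|R|}\binom{n}{\ell+1}$-type rather than $n^{\ell+1}$, and this is what makes the calculation close. The constant $4\ln 2$ emerges from that counting, not from optimizing a two-round sprinkling split as you suggest in your last paragraph; the sprinkling story you tell is not how the constant arises in any proof I know, and you have not actually carried out an optimization that produces it.
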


We will be using the following immediate corollary of de la Vega's Theorem.

\begin{corollary}\label{cor de la vega}
Let $\Theta \in \left(4 \ln 2 , \frac{1}{2}\ln n \right)$, then with high probability, there is a path in $G\left(\lfloor n/2 \rfloor, p = \Theta / \lfloor n/2  \rfloor \right)$ of length at least 
\begin{equation*}
\frac{n}{2} \left( 1 - \frac{3}{\Theta} \right).
\end{equation*}
\end{corollary}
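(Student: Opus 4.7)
The plan is to apply Theorem \ref{de la vega} directly, with $n$ replaced by $N:=\lfloor n/2\rfloor$ and the same parameter $\Theta$. First I need to verify that $\Theta$ lies in the range $(4\ln 2,\,\ln N - 3\ln\ln N)$ required by de la Vega's Theorem for a graph on $N$ vertices. The lower endpoint is identical. For the upper endpoint, since $\ln N = \ln n - \ln 2 + o(1)$, the quantity $\ln N - 3\ln\ln N$ is $\ln n - \ln 2 - 3\ln\ln n + o(1)$, which comfortably exceeds $\tfrac{1}{2}\ln n$ for all sufficiently large $n$. Hence the hypothesis $\Theta\in(4\ln 2,\tfrac{1}{2}\ln n)$ implies the hypothesis of Theorem \ref{de la vega} applied with $N$ in place of $n$, and we obtain w.h.p.\ a path in $G(N,\Theta/N)$ of length at least $N\bigl(1-\tfrac{4\ln 2}{\Theta}\bigr)$.

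It then remains to check that this bound is at least $\tfrac{n}{2}\bigl(1-\tfrac{3}{\Theta}\bigr)$, which is a small arithmetic verification exploiting the slack between $4\ln 2\approx 2.77$ and $3$. Writing the difference out,
\begin{equation*}
N\!\left(1-\tfrac{4\ln 2}{\Theta}\right) - \tfrac{n}{2}\!\left(1-\tfrac{3}{\Theta}\right) \;=\; \Bigl(N-\tfrac{n}{2}\Bigr)\!\left(1-\tfrac{4\ln 2}{\Theta}\right) \;+\; \tfrac{n}{2}\cdot\tfrac{3-4\ln 2}{\Theta}.
\end{equation*}
The first summand is at worst $-\tfrac{1}{2}$ because $N\geq\tfrac{n-1}{2}$ and the bracket is in $(0,1)$. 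The second summand is positive and, using $\Theta<\tfrac{1}{2}\ln n$, is at least $\tfrac{(3-4\ln 2)\,n}{\ln n}$, which tends to infinity. So the difference is nonnegative for $n$ large enough, completing the proof.

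I do not expect any substantive obstacle here: the content is entirely packaged in Theorem \ref{de la vega}, and everything else is a bookkeeping check that (a) the interval $(4\ln 2,\tfrac{1}{2}\ln n)$ embeds in the de la Vega range for $N=\lfloor n/2\rfloor$, and (b) the floor loss of at most $\tfrac{1}{2}$ is absorbed by the $(3-4\ln 2)/\Theta$ margin, which is comparatively enormous in the stated regime.
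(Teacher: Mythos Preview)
Your proposal is correct and is exactly the argument the paper has in mind: the corollary is stated as ``immediate'' from Theorem~\ref{de la vega}, and your verification that $\Theta<\tfrac{1}{2}\ln n$ fits inside the de~la~Vega range for $N=\lfloor n/2\rfloor$, together with the arithmetic check absorbing the floor via the $(3-4\ln 2)$ slack, is precisely what is needed.
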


In the following hypergraph analogue of de la Vega's Theorem, we do not try to obtain the best possible bounds on a likely long path, instead we care only to find sufficient bounds that we will later use to prove our ultimate result. 

\begin{lemma}\label{hyper de la vega}
Suppose $\sigma(n) \to \infty$ as $n \to \infty$ so that $\sigma = o(\ln n)$. Let $\theta = \theta(n) \in [\sigma, 2 \ln n]$ and $p= (d-1)! \frac{\theta}{n^{d-1}}.$ W.h.p. in $H_d\left(n, p \right)$, there is a path of length at least $n - \frac{2^{d+2}}{\theta} n$. 
\end{lemma}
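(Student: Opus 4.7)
I will reduce to the graph case (Theorem~\ref{de la vega}) by extracting an auxiliary random graph $G$ from $H_d(n,p)$, and then lifting a long path in $G$ to a weak Berge path of the same length in $H_d$. The main challenge is that a direct ``2-shadow'' of the hypergraph has positively correlated edges, so I instead use a random partitioning of the potential hyperedges into pair-classes to force independence.

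Independently of $H_d(n,p)$, draw a random function $\pi : \binom{[n]}{d} \to \binom{[n]}{2}$ by setting $\pi(e)$ to be a uniformly random element of $\binom{e}{2}$, independently across $e$. The preimages $\mathcal{C}_{uv} := \pi^{-1}(\{u,v\})$ partition $\binom{[n]}{d}$ into classes indexed by pairs. Declare $\{u,v\}$ to be an edge of $G$ iff $\mathcal{C}_{uv} \cap H_d(n,p) \neq \emptyset$. Because the classes are pairwise disjoint, the edges of $G$ are conditionally independent given $\pi$, with edge probabilities $q_{uv} = 1 - (1-p)^{|\mathcal{C}_{uv}|}$. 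Each $|\mathcal{C}_{uv}|$ is $\text{Bin}\bigl(\binom{n-2}{d-2}, 1/\binom{d}{2}\bigr)$-distributed with mean $\mu \sim 2n^{d-2}/d!$, so a Chernoff bound and a union bound over the $\binom{n}{2}$ pairs show that w.h.p.\ (over $\pi$) all class sizes are $(1 \pm o(1))\mu$ simultaneously. Conditioning on this typical event, $G$ stochastically dominates $G(n, (1-o(1))q)$ with $q := p\mu \sim 2\theta/(dn)$, so the relevant parameter is $\Theta := qn \sim 2\theta/d$.

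The parameter $\Theta$ may exceed the admissible range $(4\ln 2, \ln n - 3\ln\ln n)$ of Theorem~\ref{de la vega} when $\theta$ is near $2\ln n$ and $d$ is small, so I first thin $H_d(n,p)$ by retaining each potential hyperedge independently with probability $\alpha := d \ln 2/2^{d+1}$. By monotonicity of ``contains a path of length $L$,'' any weak Berge path in the thinned hypergraph is also one in $H_d(n,p)$; after thinning, $\Theta$ is replaced by $\Theta_0 \sim \theta \ln 2/2^d$, which lies in the admissible range for all $\theta \in [\sigma, 2\ln n]$, all $d \geq 3$, and $n$ large (using $\sigma \to \infty$ on the lower end and $\ln 2/2^{d-1} \leq 1/4$ on the upper end). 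Applying Theorem~\ref{de la vega} to the graph $G$ extracted from the thinned hypergraph yields, w.h.p., a path $u_1 u_2 \cdots u_k$ in $G$ of length at least $n\bigl(1 - 4 \ln 2/\Theta_0\bigr) = n - 2^{d+2} n/\theta$.

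Lifting is immediate: for each edge $\{u_i, u_{i+1}\}$ of the path in $G$, choose any witnessing hyperedge $e_i \in \mathcal{C}_{u_i u_{i+1}} \cap H_d(n,p)$; then $u_1, e_1, u_2, e_2, \ldots, u_k$ is a weak Berge path in $H_d(n,p)$ of the required length. The hardest step I anticipate is the stochastic domination of $G$ by an honest Erd\H{o}s--R\'enyi graph to which Theorem~\ref{de la vega} literally applies: since the $q_{uv}$ are conditionally independent but only almost identically distributed (they fluctuate with the class sizes), one must carefully track the $(1 \pm o(1))$ factors from the Chernoff concentration through the edge-probability formula and the de la Vega bound. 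It is this tracking that locks in the constant $2^{d+2}$ in the stated path-length bound.
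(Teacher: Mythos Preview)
Your approach is correct and genuinely different from the paper's. The paper does not randomly partition the hyperedges; instead it splits the \emph{vertex} set in half, takes three independent copies $H^1,H^2,H^3$ of $H_d(n,p/3)$, and for $H^1$ builds an auxiliary graph on $[\lfloor n/2\rfloor]$ by declaring $\{i,j\}$ an edge iff some hyperedge of $H^1$ contains $i,j$ together with $d-2$ vertices from the \emph{other} half. Distinct pairs then look at disjoint families of potential hyperedges, so independence is automatic and exact --- no Chernoff step is needed, and the auxiliary graph is literally an Erd\H{o}s--R\'enyi graph on $\lfloor n/2\rfloor$ vertices. Corollary~\ref{cor de la vega} gives a long path on the first half, the same argument on $H^2$ gives one on the second half, and $H^3$ supplies a bridging hyperedge. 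Your random pair-assignment $\pi$ buys you a single auxiliary graph on all $n$ vertices at once, so you avoid the three-copy decomposition and the concatenation step entirely; the price is the Chernoff/union-bound control of the class sizes $|\mathcal{C}_{uv}|$ and the stochastic-domination coupling to get back to an honest $G(n,q)$. As a small bonus, since your classes $\mathcal{C}_{uv}$ are disjoint, your lifted path automatically uses distinct hyperedges (a genuine Berge path, not merely a weak one).

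One arithmetic point to tidy up: after the $(1-o(1))$ losses from the class-size concentration and from $1-(1-\alpha p)^{|\mathcal{C}_{uv}|}\ge (1-o(1))\alpha p\,|\mathcal{C}_{uv}|$, your effective parameter is $\Theta_0\ge(1-o(1))\,\theta\ln 2/2^d$, which yields path length $\ge n-(1+o(1))\,2^{d+2}n/\theta$ rather than exactly $n-2^{d+2}n/\theta$. This is harmless: since $\ln 2/2^{d-1}<1$ strictly for $d\ge 2$, there is slack in your upper-range check, so you may take $\alpha$ a hair larger (say $\alpha=(1+\delta)\,d\ln 2/2^{d+1}$ for a slowly vanishing $\delta$) to absorb the $o(1)$ factors and recover the stated constant.
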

\begin{proof}
Our essential argument is that we decompose $H_d(n,p)$ into 3 (random) hypergraphs $H^1$, $H^2$, and $H^3$. First, on $H^1$, we will prove there is a path of length approximately $n/2$ along only the first $n/2$ vertices; second,  we prove that in $H^2$, there is a path along only the last $n/2$ vertices;   finally, using the edges of $H^3$, we concatenate these long paths together.

Let $p_1 =p/3$ and let $H^{i}$, for $i=1,2,3$, be random hypergraphs on $[n]$, which are {\it independent} and distributed as $H_d (n, p_1)$. Note that $H:=H^1 \cup H^2 \cup H^3$ is distributed as $H_d(n, p')$, where the probability that a generic edge in $H$ is present, $p'$, is equal to the probability that at least one of $H^1, H^2, H^3$ have this edge; in particular $p'=1-(1-p_1)^3$. Moreover, $p' \leq 3 p_1 =p$ and so $H$ can be naturally coupled with $H_d(n,p)$ such that $H$ is a subgraph of $H_d(n,p)$. Therefore, it suffices to show that w.h.p. $H$ has a path of the desired length.

Let's begin with finding a long path in $H^1$ along only the first $n/2$ vertices. We construct a random graph, $G^1$, with vertex set $\big [ \lfloor n/2 \rfloor \big ]$ by the following: for each possible edge $\{i, j \} \subset \big[ \lfloor n/2 \rfloor \big ], i \neq j$, we define $\{i,j\} \in E(G^1)$ if and only if there is some hyperedge in $H^{1}$ that contains $i$ and $j$ but no other vertex of $\big[ \lfloor n/2 \rfloor \big]$, i.e. 
\begin{equation*}
\{i,j\} \in E(G^1) \text{ iff } \{i, j, v_3, \dots, v_d\} \in E(H^1) \text{ for some } \{v_3, \ldots, v_d\} \subset \{ \big \lfloor n/2 \big \rfloor +1, \dots, n \big \}.
\end{equation*}
Each potential edge in $G^1$ considers distinct potential hyperedges in $H^{1}$, and thus the potential edges in $G^1$ are present independently of one another.  Further, the potential edge $\{i,j\}$ is not in $G^1$ iff all ${\lceil n/2 \rceil \choose d-2}$ potential hyperedges of the form $\{i, j, w_3, \ldots, w_d\}$ are not present. Hence 
\begin{align*}
P( \{i,j\} \notin E ) =  (1 - p_1)^{ { \lceil n/2 \rceil\choose d-2} }=: 1 - p^*.
\end{align*}
Consequently, $G^{1}$ is equal in distribution to $G\left( \big \lfloor n/2 \big \rfloor, p^* \right)$ and
\begin{equation*}
p^*  = 1 - e^{-p_1 { \lceil n/2 \rceil \choose d-2} + O\left( p_1^2 n^{d-2}\right) } = \frac{d-1}{3\cdot 2^{d-2}} \frac{\theta}{n}  + O\left( \frac{(\ln n)^2}{n^2} \right).
\end{equation*}
For large enough $n$, we have that 
\begin{equation*}
p^* \geq 0.9 \frac{d-1}{3 \cdot 2^{d-1}} \frac{\theta}{\lfloor n/2 \rfloor}.
\end{equation*}
In particular, we meet the conditions to apply the corollary to de la Vega's theorem (Corollary \ref{cor de la vega}) to $G^1$; w.h.p. there is a path in $G^1$ of length at least 
\begin{equation*}
\frac{n}{2} \left( 1 - \frac{10 \cdot 2^{d-1}}{(d-1) \theta} \right),
\end{equation*} 
which corresponds to a path in $H^1$ of same length spanning only vertices in $\big \lfloor n/2 \big \rfloor$ (note that the hyperedges that make up this path will include larger index vertices).

Now for finding long paths in $H^2$, we construct another random graph, $G^2$, on $\big[ \lfloor n/2\rfloor \big]$ by the following: for each possible edge $\{i, j \} \subset \big [ \lfloor n/2 \rfloor \big]$, we define $\{i,j\} \in E(G^2)$ if and only if there is some hyperedge in $H^{2}$ of the form $\big\{ \lceil \frac{n}{2} \rceil+  i, \lceil \frac{n}{2} \rceil+  j,v_3, \dots, v_d \big\} $ where $\{v_3, \dots, v_d\} \subset \big[ \lceil n/2 \rceil \big]$. By the same argument as before, w.h.p. there is a path in $G^2$ of length at least 
\begin{equation*}
\frac{n}{2} \left( 1 - \frac{10 \cdot 2^{d-1}}{(d-1) \theta} \right),
\end{equation*}
which corresponds to a path in $H^2$ of the same length spanning only vertices in $\big\{ \lceil n/2 \rceil +1, \cdots, n \big\}$.

Using $H^{3}$, we will concatenate these paths together. Let $\mathcal{A}$ denote the event that these long paths exists in $H^{1}$ and $H^{2}$. On $\mathcal{A}$, let $A$ be the last $n/ \ln n$ vertices of one such long path in $H^{1}$ and $B$ be the first $n/ \ln n$ vertices of one such long path in $H^{2}$ (both of these are defined because the length of these paths is $\frac{n}{2} -o(n) \gg n/\ln n$). 

By the independence of potential hyperedges of $H^3$ from $H^1$ and $H^2,$
\begin{equation*}
P(\mathcal{A} \cap \{ \text{no hyperedge between }A,B\}) \leq \left(1 - p_1 \right)^{ \left(\frac{n}{\ln n}\right) \left(\frac{n}{\ln n}\right)  { n - 2 \frac{n}{\ln n} \choose d-2} } \leq e^{-p_1 \frac{n^2}{(\ln n)^2} \frac{n^{d-2}}{(d-2)!} + o(1)},
\end{equation*}
which tends to zero. Hence, with high probability these two long paths on distinct vertices exist and there is a hyperedge that contains one of the last $n/\ln n$ vertices, say $a$, of the first path and one of the first $n/\ln n$ vertices, say $b$, of the second path. We construct our path in $H$ by following the long path in $H^{1}$ until we reach the vertex $a$, then choosing this connecting hyperedge containing $a$ and $b$, then following the path in $H^{2}$ until it ends. This path has length at least 
\begin{equation*}
 2 \left( \frac{n}{2} \left( 1 - \frac{10 \cdot 2^{d-1}}{(d-1) \theta} \right) - \frac{n}{\ln n} \right)+ 1  \geq n - \frac{10 \cdot 2^{d-1}}{\theta(d-1)} n - 2 \frac{n}{\ln n} \geq n - \frac{2^{d+2}}{\theta} n,
 \end{equation*}
as desired.
\end{proof}

\section{P\'{o}sa's Lemma}\label{sec: posa}

Now that we have established that paths of length $n-o(n)$ are likely to exist near the connectedness threshold of $H_d(n,p)$, our next goal is to show that the sets of endpoints of longest paths are relatively large (on the order of $n$). In the proof of finding the Hamiltonicity threshold of $G(n,p)$ (see~\cite{BB}), P\'{o}sa's Lemma is indispensable in describing the set of endpoints of these longest paths; in particular, the set of endpoints is ``non-expanding". It plays a similar role here as well.

We begin with a couple of definitions. For a hypergraph, $H$, on $[n]$, and a set of vertices $V$, we define $N(V)$ as the set of neighbors of $V$. Formally, 
\begin{equation*}
N(V) = \{ w \in [n] \setminus V: \exists e \in E(H), \exists v \in V \text{ such that } w, v \in e\}
\end{equation*}
We say that a set of vertices, $A$, is {\it non-expanding} if  $|N(A)|<2|A|.$

The hypergraph version of P\'{o}sa's Lemma is effectively the same as the graph version. Let $H$ be a hypergraph and let $P = (v_0, e_1, v_1, \ldots, e_h, v_h)$ be a longest path starting from $v_0$. Note that any neighbor of an endpoint of a longest path must necessarily be in said path or else we could extend this supposedly longest path and reach a contradiction.  Now suppose there is some present hyperedge $e$ containing both $v_h$ and $v_i$ for some $i<h$. We say that the path 
\begin{equation*}
P' = (v_0, e_1, v_1, \ldots, e_i, v_i, e, v_h, e_{h-1}, v_{h-1}, \ldots, e_{i+1}, v_{i+1})
\end{equation*}
is a {\it rotation} of $P$ by $\{ v_i, v_h\}$; see Figure \ref{fig:InformativeFigure} for an illustration of these paths.
\begin{figure}[!h]
\begin{center}
\begin{tikzpicture}[scale=.7]
\node at (-5,.5) {Before rotation};
\draw (0,0) -- (1,1) -- (2,0) -- (3,1) -- (4,0) -- (5,1) -- (6,0) -- (7,1) -- (8,0) -- (9,1) -- (10,0);
\draw[dashed] (10,-0.4) to[out=270, in=270] (4,-0.4);
\node at (0,-.2) {$v_0$};
\node at (1,1.2) {$v_1$};
\node at (5,1.2) {$v_{i+1}$};
\node at (4,-.2) {$v_i$};
\node at (10,-.2) {$v_{h}$};
\node at (-1,.5) {$P:$};
\node at (-5, -3.5) {After rotation};
\draw (0,-4) -- (1, -3) -- (2, -4) -- (3, -3) -- (4, -4);
\draw (10,-4.4) to [out=270, in=270] (4, -4.4);
\draw (5,-3) -- (6,-4) -- (7,-3) -- (8,-4) -- (9,-3) -- (10,-4);
\node at (0,-4.2) {$v_0$};
\node at (1,-2.8) {$v_1$};
\node at (4,-4.2) {$v_i$};
\node at (5,-2.8) {$v_{i+1}$};
\node at (10,-4.2) {$v_{h}$};
\node at (-1, -3.5) {$P':$};
\node at (7,-5.7) {$e$};
\end{tikzpicture}
\end{center}
\caption{\label{fig:InformativeFigure} Rotation of a path $P$.}
\end{figure}
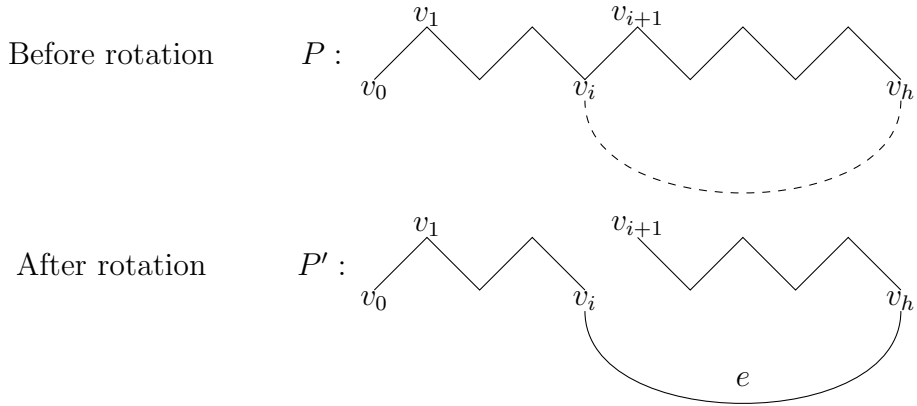

Note that the vertex set of the path is unchanged after rotation. As opposed to graphs, each hyperedge may give rise to more than one such rotation for a given path; in fact, $e$ could have already been present within the path $P$! 

The hypergraph version of P\'{o}sa's Lemma requires that we allow the possibility of repeated hyperedges in the paths which in turn gives rise to our definition of a {\it weak} cycle allowing duplicate hyperedges as well. In fact, this is the precise moment in which we turned to {\it weak} cycles.

To proceed, let $\mathcal{P}(P, v_0)$ be the set of paths obtained by the path $P$, starting at $v_0$ via any number of rotations and let $S = S(P,v_0)$ be the set of all endpoints of $\mathcal{P}(P,v_0)$ excluding $v_0$. This set $S$ is referred to as a {\it P\'{o}sa set}. 

\begin{lemma}[P\'{o}sa's Lemma]\label{posa}
$S$ is non-expanding. In other words, $|N(S)|<2|S|.$
\end{lemma}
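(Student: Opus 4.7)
The plan is to adapt the classical graph-theoretic proof of P\'osa's lemma (as presented in Bollob\'as, \emph{Random Graphs}) to the hypergraph setting. The adaptation is essentially mechanical, because the rotation operation uses only the binary adjacency relation ``$v$ and $w$ share a hyperedge'', and the paper's weak-path convention (allowing repeated hyperedges) guarantees that each rotation produces a legitimate weak path even when the rotating hyperedge already appears on the original path. This is, in fact, precisely the reason the authors introduced weak cycles in the first place, so we get to exploit it here.

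First I would show $S \cup N(S) \subseteq V(P)$. For each $v \in S$ fix a rotated path $P_v \in \mathcal{P}(P, v_0)$ ending at $v$; since rotations preserve vertex sets, $V(P_v) = V(P)$ and $P_v$ is itself a longest weak path from $v_0$. Any neighbor $w$ of $v$ with $w \notin V(P)$ would extend $P_v$ via the witnessing hyperedge, contradicting maximality. Next I would record the key rotation consequence: writing $P_v = (u_0^v, e_1^v, u_1^v, \ldots, u_h^v = v)$ with $u_0^v = v_0$, any hyperedge containing $v$ together with some $u_i^v$ for $0 \le i < h$ yields, after a single rotation of $P_v$, a new weak path ending at $u_{i+1}^v$; hence $u_{i+1}^v \in S$. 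In words, every on-path neighbor of $v$ in $P_v$ is immediately followed by another element of $S$.

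Finally I would close with a double counting on $N(S)$. For each $w \in N(S)$, choose a witness $v \in S$ adjacent to $w$; this gives a position $i$ on $P_v$ with $w = u_i^v$ and a ``successor in $S$'' namely $u_{i+1}^v$. I would then run the classical inductive argument over rotation levels $\{v_h\} = S_0 \subseteq S_1 \subseteq \cdots$ with $S = \bigcup_k S_k$, bounding $|N(S_k) \setminus S_k|$ against $|S_k|$ at each stage and accumulating to the strict bound $|N(S)| \le 2|S| - 1 < 2|S|$.

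The main obstacle is making the final counting precise when the path $P_v$, and hence the successor indexing $u_i^v$, varies with $v$: the rotation consequence gives a map from $N(S)$ into $S$, but its multiplicity must be controlled. The level-by-level induction handles this uniformly (each new level of $S$ absorbs a controlled chunk of the current boundary), and because every step of that induction is phrased purely in terms of pairwise adjacency and rotations of (weak) paths, it transfers verbatim from the graph case to the hypergraph case without new ideas.
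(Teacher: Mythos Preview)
Your proposal is correct and matches the paper's approach exactly: the paper does not give a proof at all, stating only that ``the proof of P\'osa's Lemma in Bollob\'as~[BB] can be followed word for word here'' once repeated hyperedges are allowed in weak paths. Your outline is precisely that Bollob\'as argument, and your observation that the weak-path convention is what makes the rotation step go through is exactly the point the paper emphasizes just before the lemma.
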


By allowing repeated edges as we do, the proof of P\'{o}sa's Lemma for graphs can be naturally extended to hypergraphs. In fact, the proof of P\'{o}sa's Lemma in Bollob\'{a}s~\cite{BB} can be followed word for word here. Consequently, we omit the proof.

In order to show that the set of endpoints formed by rotations of a longest path must likely be large, we will show that any ``non-trivial" non-expanding set is large. Since any set of isolated vertices trivially form a non-expanding set, we will focus on non-expanding sets of non-isolated vertices. For a hypergraph $H$, let $u(H)$ be the size of the smallest non-expanding set of non-isolated vertices. Formally, if $V_1(H)$ denotes the set of non-isolated vertices of $H$, then 
\begin{equation}\label{def of uH}
u(H) := \max \{ u : \text{if } A \subset V_1(H), |A| < u, \text{then } |N(A)| \geq 2|A| \}.
\end{equation}
As a consequence of P\'{o}sa's Lemma, for any longest path, $P$, starting at non-isolated $v_0$, we must have that $|S(P, v_0)| \geq u(H)$. An important fact that we'll later use is that if $H$ is connected, then the addition of edges to $H$ can only increase this measure $u$. 

Our eventual goal is to show that in $H_d(n,p),$ for $p$ near the connectivity threshold, w.h.p. $u$ is on the order of $n$. A very useful corollary of P\'{o}sa's Lemma for graphs, due to Bollob\'{a}s~\cite{BB}, gives a lower bound on the number of absent edges whose addition necessarily extends a longest path.  We give an analogous hypergraph version. 

\begin{corollary}\label{absent}
Let $P$ be a longest path in hypergraph, $H,$ on $[n]$, and suppose the length of this path is $h$. If $H$ does not have a weak cycle of length $h+1$, then there are at least 
\begin{equation*}
\frac{ u(H) \left( {n-1\choose d-1} - {n-1-u(H) \choose d-1} \right)}{d} 
\end{equation*}
absent edges of $H$ such that the addition of any one of these non-present edges creates a weak cycle of length $h+1$.
\end{corollary}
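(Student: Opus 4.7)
The plan is to iterate P\'{o}sa's Lemma twice and then double-count hyperedges. Fix an endpoint $v_0$ of $P$ and let $S = S(P, v_0)$ be its P\'{o}sa set. Every element of $S$ is an endpoint of some length-$h$ path and is therefore non-isolated, so $S \subseteq V_1(H)$. If $|S| < u(H)$, then by the defining property \eqref{def of uH} of $u(H)$ we would have $|N(S)| \geq 2|S|$, contradicting P\'{o}sa's Lemma; hence $|S| \geq u(H)$.

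Next, for each $w \in S$ there is a length-$h$ path $P_w$ from $v_0$ to $w$ obtained from $P$ by a sequence of rotations. Since rotations preserve length, $P_w$ is itself a longest path. The key move is to restart the rotation process at the other endpoint: with $w$ now fixed, apply rotations to $P_w$ and let $S_w := S(P_w, w)$ be the resulting P\'{o}sa set, so that $|S_w| \geq u(H)$ by the same reasoning. For every $w' \in S_w$ there is a length-$h$ path joining $w$ to $w'$, so any $d$-hyperedge $e$ containing both $w$ and $w'$ closes this path into a weak cycle of length $h+1$. Under the hypothesis that $H$ has no such cycle, every such $e$ must be absent from $E(H)$.

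It remains to count these absent edges while controlling over-counting. For each fixed $w \in S$, the number of $d$-subsets of $[n]$ containing $w$ together with at least one vertex of $S_w$ equals ${n-1 \choose d-1} - {n-1-|S_w| \choose d-1} \geq {n-1 \choose d-1} - {n-1-u(H) \choose d-1}$, by the monotonicity of ${n-1-x \choose d-1}$ in $x$. Summing over $w \in S$ yields at least $u(H) \left( {n-1 \choose d-1} - {n-1-u(H) \choose d-1} \right)$ pairs $(w, e)$. Finally, each edge $e$ is counted at most once per vertex of $e$ lying in $S$, hence at most $d$ times, so dividing by $d$ gives the stated bound. The only subtle point worth flagging is the legitimacy of the second round of rotations; this is valid precisely because rotations preserve path length, so each $P_w$ is again a longest path and P\'{o}sa's Lemma applies verbatim with $w$ as the fixed endpoint.
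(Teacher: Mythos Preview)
Your proof is correct and follows essentially the same route as the paper: apply P\'{o}sa's Lemma to get $|S| \geq u(H)$, re-root at each $w \in S$ and apply P\'{o}sa's Lemma again to get $|S_w| \geq u(H)$, observe that any hyperedge through $w$ and some $w' \in S_w$ would close a weak $(h+1)$-cycle and hence must be absent, and finally count these absent edges with the over-counting factor~$d$. The only cosmetic differences are that you sum over all of $S$ (rather than a chosen subset of size $u(H)$) and that you make explicit the containment $S \subseteq V_1(H)$ needed to invoke the definition of $u(H)$.
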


\begin{proof}
Let $v_0$ be one of the two endpoints of $P$ and $u=u(H)$. We define the set of endpoints formed by rotations of $P$ by $S$ as before. By P\'{o}sa's Lemma, we know that $S$ is non-expanding and so $|S| \geq u$. Let $w_1, w_2, \cdots, w_{u}$ be distinct vertices of $S$ with some largest paths $P_1,$ $P_2,$ $\cdots,$ $P_{u}$ starting at $v_0$ that end in $w_1,$ $w_2,$ $\cdots,$ $w_{u}$, respectively. Now for each $i \leq u$, let $S_i$ be the set of endpoints formed by rotations of a longest path $P_i$ starting from $w_i$; in other words, let $S_i = S(P_i, w_i).$  Again by P\'{o}sa's Lemma, we have that $|S_i| \geq u$ and there are some $t^i_1, t^i_2, \cdots t^i_{u}$ vertices in  $S_i$. If there is some edge containing $w_i$ and at least one vertex of $S_i$, then there would be a weak cycle of length $h+1$; by hypothesis, no such cycle exists. To finish off the proof, we enumerate these missing edges. 

Let $\mathcal{E}_i$ be the collection of absent edges $e$ which contain $w_i$ and some $t^i_j$. We wish to determine a lower bound on $|\mathop{\cup}_i \mathcal{E}_i|$. Note that
\begin{align*}
|\mathcal{E}_i| &={u \choose 1}{n-1-u \choose d-2} + {u \choose 2}{n-1-u \choose d-3} + \cdots + {u \choose d-1}{n-1-u \choose 0} \\&= {n-1 \choose d-1} - {n-1-u \choose d-1}.
\end{align*}
Further, each absent edge of $\mathcal{E}_i$ contains $w_i$, and so any of these absent edges can be in at most $d$ different $\mathcal{E}_i$'s. Therefore
\begin{equation*}
\left| \mathop{\cup}_{i=1}^{u} \mathcal{E}_i \right|  \geq \frac{ u \left( {n-1 \choose d-1}-{n-1-u \choose d-1} \right)}{d},
\end{equation*}
as desired.
\end{proof}

The next nearly immediate corollary gives a lower bound on the number of non-present edges when $u$ is on the order of $n$. 

\begin{corollary}\label{absent cd}
Let $D>1$ be a constant. There is some $C=C(D)>0$ such that if $H$ is a hypergraph on $[n']$, where $n-\ln n \leq n' \leq n$, with $u(H) \geq \frac{n}{D}$ that has a longest path of length $h$, but no weak cycle of length $h+1$, then there are at least $C n^d$ absent edges in $H$ such that the addition of any one of these non-present edges creates a weak cycle of length $h+1$.
\end{corollary}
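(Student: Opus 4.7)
The plan is to deduce this corollary directly from Corollary \ref{absent} by substituting the hypothesis $u(H) \geq n/D$ into the bound and estimating the binomial coefficient expression. Writing $u=u(H)$ and $m = n'-1 \geq n - \ln n - 1$, Corollary \ref{absent} gives at least
\begin{equation*}
\frac{u}{d} \left( \binom{m}{d-1} - \binom{m-u}{d-1} \right)
\end{equation*}
absent edges whose addition creates a weak cycle of length $h+1$. Since $u/d \geq n/(dD)$, the goal reduces to showing that the bracketed difference is at least $C' n^{d-1}$ for some constant $C' = C'(D,d) > 0$, uniformly in the permitted range of $u \in [n/D, n']$ and $n' \in [n-\ln n, n]$.

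To bound the binomial difference from below, I would use the telescoping identity
\begin{equation*}
\binom{m}{d-1} - \binom{m-u}{d-1} = \sum_{j=1}^{u} \binom{m-j}{d-2}.
\end{equation*}
Truncating the sum to $j \leq \min(u, \lfloor n/(2D) \rfloor)$ and noting that each summand satisfies $m - j \geq n - \ln n - 1 - n/(2D) \geq n/2$ for all sufficiently large $n$ (since $D > 1$), each term is at least $\binom{n/2}{d-2} \sim (n/2)^{d-2}/(d-2)!$. Since there are at least $\lfloor n/(2D) \rfloor$ such terms (the truncation is at most $u$ because $u \geq n/D > n/(2D)$), the total is at least $\frac{n}{2D} \cdot \frac{(n/2)^{d-2}}{(d-2)!}(1 - o(1))$, which is of order $n^{d-1}$ with an explicit constant depending only on $D$ and $d$.

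Combining, the number of beneficial absent edges is at least $\frac{n}{dD} \cdot C' n^{d-1} \geq C n^d$ for some $C = C(D) > 0$ once $n$ is large enough; for the finitely many small $n$ one simply adjusts $C$ downward. The only mild obstacle is verifying that the bound works for $u$ close to $n'$ (where $m - u$ is small), but this is precisely the case where the difference $\binom{m}{d-1} - \binom{m-u}{d-1}$ is largest, so the truncation argument above (which only uses $u \geq n/(2D)$) handles all cases uniformly. No other delicate step is required; this corollary is essentially a bookkeeping consequence of the preceding corollary.
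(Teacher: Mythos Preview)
Your proposal is correct and follows essentially the same approach as the paper: both arguments invoke Corollary~\ref{absent}, replace $u(H)$ by its lower bound $n/D$, and then show the resulting binomial difference is of order $n^{d-1}$. The only difference is cosmetic---the paper expands $\binom{n'-1}{d-1}$ and $\binom{n'-1-n/D}{d-1}$ directly as polynomials in $n$ to extract the constant $\frac{1}{D\,d!}\bigl(1-(1-1/D)^{d-1}\bigr)$, whereas you use the telescoping identity $\binom{m}{d-1}-\binom{m-u}{d-1}=\sum_{j=1}^{u}\binom{m-j}{d-2}$ and bound a truncated portion of the sum termwise; your route yields a smaller explicit constant but is equally valid.
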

\begin{proof}
The previous corollary gives the number of such absent beneficial edges is at least
\begin{align*}
\frac{ u(H) \left( {n'-1\choose d-1} - {n'-1-u(H) \choose d-1} \right)}{d} &\geq \frac{\frac{n}{D} \left( {n'-1 \choose d-1} - {n'-1-\frac{n}{D} \choose d-1} \right)}{d} \\& = \frac{ n^d}{D \, d!} \left( 1 - \left(1-\frac{1}{D} \right)^d +O\left( \frac{\ln n}{n}\right) \right) \geq C n^d,
\end{align*}
for any $C<\frac{1}{D \, d!} \left( 1 - \left(1-\frac{1}{D}\right)^d\right)$ and $n$ sufficiently large.
\end{proof}

\section{All non-expanding sets must be large}\label{sec: nonexpanding}

As noted before, to show that w.h.p. P\'{o}sa sets are large, we'll show that the smallest non-expanding set must likely be on the order of $n$. 
\begin{lemma}\label{u H bound}
Let $|\omega(n)| \leq \ln \ln \ln n$ and $p=(d-1)! \frac{\ln n+\omega}{n^{d-1}}$. Then w.h.p.
\begin{equation*}
u(H_d(n,p)) \geq n/3^d.
\end{equation*}
\end{lemma}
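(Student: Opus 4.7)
The plan is a first-moment calculation. I enumerate over hypothetical small non-expanding subsets $A \subseteq V_1(H_d(n,p))$ together with candidate sets $B$ that could equal $N(A)$, and show that the expected number of such bad witnesses tends to $0$. The case $|A| = a = 1$ is trivial, since every non-isolated vertex lies in a hyperedge of size $d \geq 3$ and so has at least $d - 1 \geq 2 = 2|A|$ neighbours. Assume $a \geq 2$, and fix disjoint sets $A, B \subseteq [n]$ with $|A| = a$ and $|B| = k < 2a$; write $C = A \cup B$ and $W = [n] \setminus C$. The event $\{N(A) = B\} \cap \{A \subseteq V_1(H_d(n,p))\}$ splits cleanly into a ``crossing'' event on the potential edges meeting both $A$ and $W$ and an ``internal'' event on the potential edges contained in $C$. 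These involve disjoint families of potential edges, so they are independent; the crossing event requires every such crossing edge to be absent, and the internal event requires that the present hyperedges inside $C$ which touch $A$ together cover all of $C$.

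The crossing factor is routine. The number of crossing potential edges is $M_1(a,k) = \binom{n}{d} - \binom{n-a}{d} - \binom{a+k}{d} + \binom{k}{d}$, and an elementary estimate yields $M_1(a,k) \geq (1 - o(1))\, a \binom{n-1}{d-1}$ uniformly for $a \leq n/3^d$ and $k \leq 2a - 1$, so $(1-p)^{M_1(a,k)} \leq e^{-(1 - o(1))\, a(\ln n + \omega)}$. The main obstacle is bounding the internal factor $Q(a,k)$. In the graph case ($d = 2$) the events ``$w \in B$ is adjacent to $A$ inside $C$'' are literally independent across $w$, because each pair $\{v, w\}$ is a private potential edge, and the internal probability factorises as $(1 - (1-p)^a)^k$. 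For hypergraphs one potential edge can cover several vertices of $B$ (and of $A$) simultaneously, so the coverage events are strongly coupled and no such clean product formula exists. To handle this I plan to use the greedy edge-finding algorithm the author previews in the overview: reveal the potential hyperedges of $C$ in a carefully chosen order and track the running coverage of $C$, noting that any complete cover must use at least $\lceil (a+k)/d \rceil$ present hyperedges inside $C$. A careful counting along the algorithm then yields a bound of roughly $Q(a,k) \leq \bigl(C_d\, (a/n)^{d-1} \ln n\bigr)^{\lceil a/d \rceil}$ for some constant $C_d$ depending only on $d$.

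Combining the three ingredients, the union bound over bad witnesses reads
\[
\sum_{a=2}^{\lfloor n/3^d \rfloor}\sum_{k=0}^{2a-1} \binom{n}{a}\binom{n-a}{k}\, (1-p)^{M_1(a,k)}\, Q(a,k),
\]
and I would split the range of $a$ into a ``small-$a$'' regime where the coverage factor $Q(a,k)$ supplies the decay through its $(a/n)^{d-1}$ factor and a ``large-$a$'' regime where the crossing factor $(1-p)^{M_1} \leq n^{-a(1-o(1))}$ dominates. The threshold $n/3^d$ is calibrated exactly so that the counting factor $\binom{n}{a}\binom{n-a}{2a-1} \leq (en/a)^{3a}$ is still beaten at the upper endpoint (the factor $3^{3a}$ from $(en/a)^{3a}$ at $a \sim n/3^d$ being absorbed by the $n^{-a}$ crossing decay). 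A delicate balancing in both regimes shows the total sum is $o(1)$, yielding the desired conclusion.
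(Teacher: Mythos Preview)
Your first-moment strategy and the greedy edge-finding idea both match the paper's, but there is a real gap in your small-$a$ regime. First, the exponent you write, $\lceil a/d\rceil$, is inconsistent with your own edge count $\lceil(a+k)/d\rceil$ and is already too weak (for $d=3$, $a=2$, $k=3$ it gives exponent $1$ and the summand is of order $n\ln n$). But even with the corrected exponent $\lceil(a+k)/d\rceil$, the coverage bound is not strong enough when $a$ is of order $\ln n$: for $d=3$ and $k=2a-1$ the exponent equals $a$, and after combining $\binom{n}{a}\binom{n-a}{2a-1}$, the crossing factor $\approx n^{-a}$, and $Q\le (C_d(a/n)^{2}\ln n)^{a}$, the summand works out to roughly $\bigl(C'(\ln n)/a\bigr)^{a}/n$, which for $a$ near a constant multiple of $\ln n$ has base exceeding $e$ and hence diverges. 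The paper's own greedy bound on $P(a,b)$ suffers the same defect in this range, which is exactly why the paper does \emph{not} apply it below $a=n^{1/4}$.

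The paper's fix is to restrict to \emph{minimal} non-expanding sets $A$ of non-isolated vertices and prove (Lemma~\ref{minimal nonexpanding}) that minimality forces the induced hypergraph on $T=A\cup N(A)$ to be \emph{connected}. Connectedness requires at least $\lceil(|T|-1)/(d-1)\rceil$ internal edges rather than merely $\lceil|T|/d\rceil$, and this strictly larger exponent is precisely what drives the first moment to zero over $|T|\in[\max\{4,d\},\,3n^{1/4}]$. The remaining range is then split in two: on $[n^{1/4},\,6n/\ln n]$ the greedy bound on $P(a,b)$ does suffice, while on $[6n/\ln n,\,n/3^d]$ the trivial bound $P(a,b)\le 1$ is used together with a monotonicity argument on the exponent. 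Your claim that $M_1\ge(1-o(1))\,a\binom{n-1}{d-1}$ uniformly also fails once $a=\Theta(n)$, so this top regime needs more care than a bare $n^{-a}$ crossing bound provides.
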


These next few sections deal with showing the absence of non-expanding sets according to their size. Specifically, we break up this range into small $\left([1, n^{1/4}]\right)$ and medium $\left([n^{1/4}, n/3^d]\right)$ sized sets. We begin by showing the likely absence of small non-expanding sets of non-isolated vertices.

\subsection{No Small Non-expanding Sets}

\begin{lemma}\label{no small}
Let $|\omega(n)| \leq \ln \ln \ln n$ and $p=(d-1)!\frac{\ln n +\omega}{n^{d-1}}$. Then w.h.p. there are no non-expanding sets of non-isolated vertices of size at most $n^{1/4}.$
\end{lemma}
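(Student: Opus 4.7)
The plan is to apply the first moment method, bounding the expected number of non-expanding sets of non-isolated vertices separately for each size $a\in\{2,\ldots,\lfloor n^{1/4}\rfloor\}$ and then summing. The $a=1$ case is immediate, since any non-isolated vertex of $H_d(n,p)$ with $d\ge 3$ lies in some hyperedge and therefore has at least $d-1\ge 2$ neighbors, contradicting $|N(A)|<2$.

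For a fixed $A$ of size $a$, I would union bound over the possible identity $B=N(A)$ with $b=|B|\le 2a-1$. The event $\{N(A)=B\}\cap\{A\subseteq V_1\}$ factors over two disjoint families of potential hyperedges---the ``outside'' edges (incident to $A$ and to $[n]\setminus(A\cup B)$) and the ``inside'' edges (contained in $A\cup B$ and meeting $A$)---and is therefore a product of two independent probabilities:
\begin{equation*}
  P_1(a,b)\cdot P_2(a,b)=(1-p)^{M(a,b)}\cdot P\bigl(\text{every vertex of }A\cup B\text{ lies in some ``inside'' edge}\bigr),
\end{equation*}
with $M(a,b)=\binom{n}{d}-\binom{n-a}{d}-\binom{a+b}{d}+\binom{b}{d}$. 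Standard estimates yield $P_1(a,b)\le n^{-a(1+o(1))}(\ln\ln n)^a$ using the hypothesis $|\omega|\le\ln\ln\ln n$, so essentially the whole argument hinges on upper-bounding $P_2(a,b)$.

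Bounding $P_2$ from above is the main obstacle: the coverage events for different vertices of $A\cup B$ are positively correlated through shared hyperedges, so FKG only provides a lower bound. My approach is to exploit two structural constraints on the random set $F$ of inside edges. First, each hyperedge of $F$ contains at most $d$ vertices of $A\cup B$, so covering $A\cup B$ forces $|F|\ge\lceil(a+b)/d\rceil$. Second, each hyperedge of $F$ meets $A$, so it contains at most $d-1$ vertices of $B$, forcing $|F|\ge\lceil b/(d-1)\rceil$. A binomial tail bound on $|F|$, whose mean is $p\cdot a\binom{a+b-1}{d-1}=O(a^d\ln n/n^{d-1})$, then yields
\begin{equation*}
  P_2(a,b)\le\left(\frac{C_d\, a^{d-1}\ln n}{n^{d-1}}\right)^{\!\max\bigl(\lceil(a+b)/d\rceil,\,\lceil b/(d-1)\rceil\bigr)}
\end{equation*}
for a constant $C_d$ depending only on $d$.

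Combining these ingredients, multiplying by $\binom{n}{a}$, and summing first over $b\le 2a-1$ (the sum being dominated by $b$ near its maximum) and then over $a$, Stirling's formula applied to $a!$ and $(2a-1)!$ should turn the $n^{3a}\cdot n^{-a}$ growth of count-times-$P_1$ into a factor of the form $(\ln n)^{O(a)}/n$, while the coverage factor contributes an additional negative power of $n$ linear in $a$. The resulting series should be $o(1)$. The most delicate point will be at intermediate sizes $a\asymp\ln n$, where the simple Chernoff estimate on $P_2$ is nearly but not quite tight; if necessary I would sharpen it there via the ``greedy edge-finding algorithm'' announced in the introduction for the medium-size regime of the non-expanding analysis.
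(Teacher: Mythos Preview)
Your first-moment setup and the factorisation into $P_1\cdot P_2$ are sound, and for $d\ge 4$ the arithmetic actually closes. The gap is at $d=3$. With $b=2a-1$ your edge-count lower bound is
\[
k=\max\Bigl(\Bigl\lceil\tfrac{a+b}{d}\Bigr\rceil,\;\Bigl\lceil\tfrac{b}{d-1}\Bigr\rceil\Bigr)=\max(a,a)=a,
\]
so the summand you get (after Stirling) is of the form $n^{-1}\bigl(C\,(\ln n)(\ln\ln n)/a\bigr)^{a}$ with an explicit constant $C>1$. Maximising over $a$ puts the peak near $a\asymp\ln n\cdot\ln\ln n$, where this expression is $n^{-1}\exp(\Theta(\ln n\cdot\ln\ln n))\to\infty$; so the first moment bound blows up exactly in the regime you flagged as ``delicate.'' The greedy edge-finding algorithm will not rescue this: it too produces the exponent $\lceil b/(d-1)\rceil=a$, only with a slightly different base, and the base is already of the correct order $a^{d-1}p$.

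What the paper does differently is to restrict attention to \emph{minimal} non-expanding sets. This buys the structural fact (Lemma~\ref{minimal nonexpanding}) that the induced hypergraph on $T=A\cup N(A)$ is \emph{connected}, and connectivity on $t=|T|$ vertices forces at least $\lceil (t-1)/(d-1)\rceil$ edges rather than the weaker $\lceil t/d\rceil$ coming from mere coverage. For $d=3$ and $t\approx 3a$ this lifts the exponent on $p$ from $a$ to roughly $3a/2$, and that extra half is exactly what kills the $a\asymp\ln n$ range: the resulting term $c_t$ satisfies $c_{t+1}/c_t\to 0$ uniformly, and $c_\ell\to 0$. Your argument can be repaired by inserting this minimality reduction and replacing the coverage bound on $P_2$ with the connectivity bound $P(H_d(t,p)\text{ connected})\le\bigl(e\,t^{d-1}p/(d-1)!\bigr)^{(t-1)/(d-1)}$.
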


Note that it suffices to prove that w.h.p. there are no {\it minimal} non-expanding sets of non-isolated vertices of size at most $n^{1/4}$ (minimal meaning no proper subset). Before we begin the proof, let's prove a consequence of a non-expanding set being minimal.

\begin{lemma}\label{minimal nonexpanding}
Suppose $H$ is a hypergraph with a non-expanding set, $A,$ that has no proper non-expanding subsets. The induced hypergraph on $T:=A \cup N(A)$ is connected.
\end{lemma}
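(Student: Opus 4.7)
The plan is to argue by contradiction: suppose the induced hypergraph on $T = A \cup N(A)$ is disconnected, and produce a proper non-expanding subset of $A$, contradicting minimality. If the induced hypergraph is disconnected, partition $T$ into two nonempty parts $T_1,T_2$ with no hyperedge of $H$ lying in $T$ that meets both. Set $A_i = A \cap T_i$ for $i = 1,2$.

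The first observation I would establish is a useful closure property: every hyperedge of $H$ that contains a vertex of $A$ is actually contained in $T$. Indeed, any other vertex of such a hyperedge is, by definition, either in $A$ or in $N(A)$. Consequently, the neighborhood $N(A_i)$ (computed in $H$) coincides with the neighborhood computed in the induced hypergraph on $T$, and, because the $T_1/T_2$ partition is a separation of that induced hypergraph, $N(A_1) \subseteq T_1 \setminus A_1$ and $N(A_2) \subseteq T_2 \setminus A_2$. In particular $N(A_1) \cap A = N(A_2) \cap A = \emptyset$ and $N(A_1) \cap N(A_2) = \emptyset$.

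Next I would check that both $A_1$ and $A_2$ are nonempty. If, say, $A_1 = \emptyset$, then $A = A_2 \subseteq T_2$, so every vertex $w \in T_1$ would lie in $N(A) = N(A_2)$ and hence would share a hyperedge with some $v \in A_2 \subseteq T_2$; since $v \in A$, that hyperedge lies in $T$ and crosses the partition, contradicting the separation. So $A_1$ and $A_2$ are proper, nonempty subsets of $A$.

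Finally, decompose the neighborhood: every vertex of $N(A)$ is a neighbor of $A_1$ or $A_2$ and avoids $A$, so $N(A) = N(A_1) \sqcup N(A_2)$ (disjoint by the step above). Hence
\begin{equation*}
|N(A_1)| + |N(A_2)| \;=\; |N(A)| \;<\; 2|A| \;=\; 2|A_1| + 2|A_2|,
\end{equation*}
so at least one of $|N(A_1)| < 2|A_1|$ or $|N(A_2)| < 2|A_2|$ must hold, producing a proper non-expanding subset of $A$. This contradicts the assumed minimality and completes the proof. The argument is essentially structural, so I do not anticipate a hard step; the only subtlety is verifying that $A_1,A_2$ are both nonempty, which requires the little hyperedge-crossing argument above.
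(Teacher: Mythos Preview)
Your proof is correct and follows essentially the same contradiction strategy as the paper: split $T$ into two separated pieces, set $A_i = A \cap T_i$, and use additivity of the neighborhood sizes together with $|N(A)| < 2|A|$ to force one $A_i$ to be non-expanding. The paper argues via $|A_1 \cup N(A_1)| + |A_2 \cup N(A_2)| < 3|A_1| + 3|A_2|$ while you argue directly via $|N(A_1)| + |N(A_2)| < 2|A_1| + 2|A_2|$; these are equivalent once one knows $N(A_i) \cap A = \emptyset$, which you establish explicitly. In fact your write-up is slightly more careful than the paper's: you verify that both $A_1$ and $A_2$ are nonempty (via the hyperedge-crossing argument), a point the paper leaves implicit but which is needed to conclude that the resulting non-expanding $A_i$ is a \emph{proper} subset of $A$.
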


\begin{proof}
We prove this lemma by contradiction. Suppose the induced hypergraph is not connected. Then we can decompose the induced hypergraph into 2 disjoint hypergraphs, $T=T_1 \cup T_2$, where $T_i$ are non-empty and there are no edges between the vertex sets $T_1$ and $T_2$ (using only edges contained within $T$). Let $A_i = A \cap T_i$. Note that $A_i \cup N(A_i) \subset T_i$ or else there is a edge from $T_1$ to $T_2$, contradicting their separation. Hence the vertex sets $A_1 \cup N(A_1)$ and $A_2 \cup N(A_2)$ are disjoint, and so
\begin{align*}
|A_1 \cup N(A_1)| + |A_2 \cup N(A_2)| &= |A_1 \cup A_2 \cup N(A_1) \cup N(A_2) |  \\&= |A| + |N(A)|  < 3 |A| = 3|A_1| + 3|A_2|.
\end{align*}
Consequently, either $|A_1 \cup N(A_1)| < 3|A_1|$ or $|A_2 \cup N(A_2)| < 3|A_2|$. This forces that $|N(A_i)| < 2|A_i|$ for at least one of $i=1,2$, which in turn, contradicts minimality of the non-expanding set $A$.
\end{proof}

Now we begin the proof of Lemma \ref{no small}.

\begin{proof}
If $A$ is a minimal non-expanding set of non-isolated vertices of size at most $n^{1/5},$ then by the previous lemma, the induced hypergraph on $T':=A \cup N(A)$ is connected and necessarily 
\begin{equation*}
d \leq |T'| < n^{1/4}+2n^{1/4}=3n^{1/4}.
\end{equation*}
Hence to prove the lemma, it ``almost" suffices to show that w.h.p. there are no such sets $T$ with: 
\begin{itemize}
\item $|T|=:t \in \left[\ell:=\max\{4, d\}, n_1:=3n^{1/4}\right],$
\item the induced hypergraph on $T$ is connected, and
\item at least $\lceil t/3 \rceil$ vertices of $T$ have no neighbors outside of $T$.
\end{itemize} 
We say almost, because in addition, we need to deal with the excluded case $d=3, t=3$. Namely, we've excluded the case where $d=3$ and there is a non-expanding set $A$ with $|A\cup N(A)|=3.$ This event corresponds to either a pair of adjacent degree 1 vertices or a triplet of adjacent degree 1 vertices; the probability of both of these events can be easily shown to tend to zero using a simple first moment argument, and so their proofs are omitted. To finish off the proof of the lemma, we show that $E_1:=E[\# \text{ of such sets }T]$ tends to zero.

By the union bound over all sets of cardinality $t$ along with all further subsets of cardinality $\lceil t/3 \rceil$,
\begin{equation}\label{eqn 432}
E_1 \leq \sum_{t=\ell}^{n_1} {n \choose t} {t \choose \lceil t/3 \rceil} P_t \leq \sum_{t = \ell}^{n_1} n^t \, P_t,
\end{equation}
where $P_t$ is the probability that the induced hypergraph on $[t]$ is connected and there are no edges containing a vertex of $[\lceil t/3 \rceil]$ and a vertex of $[n]\setminus [t]$. Notice that the set of potential edges contained entirely within $[t]$ is disjoint from the set of potential edges containing a vertex of $[\lceil t/3 \rceil]$ and a vertex of $[n] \setminus [t]$; consequently, these two events are independent! Hence
\begin{equation}\label{eqn: htp qn}
P_t= P\left( \text{induced hypergraph on }[t]\text{ is connected}\right) q^{N_t},
\end{equation}
where $N_t$ is the number of sets of cardinality $d$ that contain at least one element of $[\lceil t/3 \rceil]$ and at least one element of $[n] \setminus [t]$. In general, if $A$ and $B$ are disjoint subsets of $[n]$, then the number of cardinality $d$ subsets of $[n]$ that contain at least one element of $A$ and at least one element of $[n] \setminus (A \cup B)$ is 
\begin{equation}\label{eqn: 10.15.1}
{n \choose d}-{n-|A| \choose d} - {|A|+|B| \choose d}+{ |B| \choose d};
\end{equation}
in our particular case, we have that
\begin{equation*}
N_t = {n \choose d}-{n-\lceil t/3 \rceil \choose d}-{t \choose d}+{t-\lceil t/3 \rceil \choose d}.
\end{equation*}
Uniformly over the range of $t$, we have that
\begin{align*}
N_t &= {n \choose d} \left( 1 - \left( 1 - \frac{\lceil t/3 \rceil}{n} + O\left(\frac{n_1^2}{n^2}\right) \right)^d + O(n_1^d/n^d) \right) \\&= {n \choose d} \left( \frac{ d \, \lceil t/3 \rceil}{n} + O\left( \frac{n_1^2}{n^2}\right)\right). 
\end{align*}
Hence
\begin{equation*}
q^{N_t}   \leq  \exp \left( -p {n \choose d}\left(\frac{d}{n}\lceil t/3 \rceil +O\left(\frac{n_1^2}{n^2} \right)\right)  \right) = \exp \left( - \lceil t/3 \rceil p {n-1 \choose d-1} + o(1) \right).
\end{equation*}
So for sufficiently large $n$, 
\begin{equation}\label{eqn: 9.29.1}
q^{N_t} \leq 2 \exp \left( - \frac{t}{3} \frac{p \, n^{d-1}}{(d-1)!}\right).
\end{equation}

Now let's take on the probability that the induced hypergraph on $[t]$ is connected from \eqref{eqn: htp qn}. The induced hypergraph on $[t]$ is distributed as $H_d(t,p)$. For a hypergraph on $t$ vertices to be connected, there must be at least $\Big \lceil \frac{t-1}{d-1} \Big \rceil$ edges present. Hence
\begin{align*}
P(H_d(t,p) \text{ is connected})  \leq { {t \choose d} \choose \lceil \frac{t-1}{d-1} \rceil } p^{\lceil \frac{t-1}{d-1}\rceil}  \leq \left( \frac{e \frac{t^d}{d!}}{\lceil \frac{t-1}{d-1} \rceil} p \right)^{\lceil \frac{t-1}{d-1}\rceil}.
\end{align*}
Taking the ceilings off both terms above increases this expression. In fact, we get that
\begin{align}\label{eqn: 10.5.1}
P(H_d(t,p) \text{ is connected}) &\leq \bigg ( e\,  t^{d-1} \, \frac{p}{(d-1)!} \underbrace{\frac{t}{t-1} \, \frac{d-1}{d} }_{\leq 1} \bigg )^{\frac{t-1}{d-1}}\nonumber \\ &\leq \left( e \, t^{d-1} \, \frac{p}{(d-1)!} \right)^{\frac{t-1}{d-1}}.
\end{align}
Therefore, using this bound along with \eqref{eqn: 9.29.1}, \eqref{eqn: htp qn} becomes
\begin{equation*}
P_t \leq 2 \left( e \, t^{d-1} \, \frac{p}{(d-1)!} \right)^{\frac{t-1}{d-1}} e^{-  \frac{t}{3}  p \, {n-1 \choose d-1}  }.
\end{equation*}
Plugging this bound into \eqref{eqn 432} gives
\begin{equation*}
E_1 \leq 2 \sum_{t=\ell}^{n_1} n^t \left( e \, t^{d-1} \, \frac{p}{(d-1)!} \right)^{\frac{t-1}{d-1}} e^{-  \frac{t}{3}  p \, {n-1 \choose d-1}  } =: 2\sum_{t= \ell}^{n_1} c_t.
\end{equation*}
We'll show that the dominant term in the sum is the first term and note that
\begin{equation*}
c_{\ell} = O\left( n^{\ell} \left( \frac{\ln n}{n^{d-1}}\right)^{\frac{\ell-1}{d-1}} e^{-\frac{\ell}{3} \, p \, {n-1 \choose d-1}} \right)\ll \frac{(\ln n)^2}{n^{\frac{\ell}{3}-1}} \to 0.
\end{equation*}

To show that the dominant contribution of the sum of $c_t$ is $c_{\ell}$, it suffices to prove that the ratio of consecutive terms uniformly tends to zero. Now
\begin{equation*}
\frac{c_{t+1}}{c_t} = n \left( e \, \frac{ p}{(d-1)!} \right)^{\frac{1}{d-1}} \left(\frac{t+1}{t}\right)^t \, t \,  e^{-\frac{p}{3}{n-1 \choose d-1}}.
\end{equation*}
Therefore,
\begin{equation*}
\frac{c_{t+1}}{c_t} = O\left( n \, \frac{(\ln n)^{1/(d-1)}}{n} \, n_1 \left( \frac{\ln \ln n}{n} \right)^{1/3} \right) \ll \frac{\ln n}{n^{1/12}} \to 0;
\end{equation*}
whence 
\begin{equation*}
E_1 \leq 2 \sum_{t=\ell}^{n_1} c_t = O(c_{\ell}) \to 0,
\end{equation*}
as desired. Note that if the right endpoint value $n_1$ is much larger than $n^{1/4},$ then the ratio of consecutive terms of $c_t$ would start to become more than 1, and we could not simply bound the sum by just the order of the first summand.
\end{proof}

\subsection{No Medium Non-expanding Sets}

Now let's consider the number, $X$, of non-expanding sets of size in $[n^{1/4}, n/3^d]$. In particular, we relax the condition that the non-expanding set contains no isolated vertices anticipating that this restriction is unnecessary for larger sets of vertices. To finish off the proof of Lemma \ref{u H bound}, we want to show that w.h.p $X=0$, which we do by proving that $E[X] \to 0$. 

Note that 
\begin{equation*}
E[X] = \sum_{A, |A|=n^{1/4}}^{n/3^d} \sum_{B, |B|=0}^{2|A|-1} P\left( \text{the neighbor set of }A\text{ is }B\right),
\end{equation*}
and in particular,  
\begin{equation*}
P\left( \text{the neighbor set of }A\text{ is }B\right) = P(A, B) q^{{n \choose d}-{n-|A| \choose d}-{|A|+|B| \choose d}+{|B| \choose d}},
\end{equation*}
where $P(A,B)$ is the probability that each vertex in $B$ is in an edge, using only vertices in $A \cup B$, with at least one vertex of $A$; moreover, the vertices from $A$ are not in a edge with any vertex from $[n] \setminus (A \cup B)$, which gives rise to the $q$ term, see \eqref{eqn: 10.15.1} for the exponent. Also, we define $P(A,B)=1$ if $B = \emptyset$. By symmetry, $P(A,B)$ depends only upon the cardinalities of $A$ and $B$, which we denote by $P(a,b)$. Thus
\begin{equation}\label{eqn: 9.18.1}
E[X] = \sum_{a=n^{1/4}}^{n/3^d} \sum_{b=0}^{2a-1} {n \choose a}{n-a \choose b} P(a,b) q^{{n \choose d}-{n-a \choose d}-{a+b\choose d}+{b \choose d}}.
\end{equation}
The only non-explicit term in this expectation is $P(a,b)$. For sets $A$ with $a$ near the order of $n$, we would expect that the condition that each vertex of $B$ to be in at least one edge with a vertex of $B$ is not terribly restrictive. In fact, for $a \in [6n/\ln n, n/3^d]$, the trivial bound $P(a,b) \leq 1$ will be sufficient for our purposes here. However, for the remaining $a$, we will need to take on this $P(a,b)$ term. In fact, we establish a bound on $P(a,b)$ that is interesting in its own right; then, using this bound, we show that the sum in \eqref{eqn: 9.18.1} over remaining $a \in [n^{1/4}, 6n/\ln n]$ tends to zero.

\subsection{No non-expanding sets of size in $[6n/\ln n, n/3^d]$}

\begin{lemma}\label{no large}
Uniformly over $|\omega| \leq \ln \ln \ln n$ and $p = (d-1)! \frac{\ln n + \omega}{ n^{d-1} }$,
\begin{equation*}
E_1:=\sum_{a=6n/\ln n}^{n/3^d} \sum_{b=0}^{2a} {n \choose a}{n-a \choose b} P(a,b) q^{{n \choose d}-{n-a \choose d}-{a+b\choose d}+{b \choose d}} \to 0.
\end{equation*}
\end{lemma}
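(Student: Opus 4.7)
The plan is to use the trivial bound $P(a,b)\leq 1$ and to show that the ``Poisson exponent'' $pN$, where $N := \binom{n}{d}-\binom{n-a}{d}-\binom{a+b}{d}+\binom{b}{d}$, dominates the logarithm of the combinatorial factor $\binom{n}{a}\binom{n-a}{b}$ by $\Omega(a\ln n)$, uniformly in $a\in[6n/\ln n,\,n/3^d]$ and $b\in[0,2a]$. Since the number of $(a,b)$ pairs is $O(n^2)$, this will be more than enough.

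For the combinatorial factor I would write $\binom{n}{a}\binom{n-a}{b} = \binom{n}{a+b}\binom{a+b}{a} \leq \binom{n}{a+b}\,2^{a+b}$ and use $\binom{n}{a+b} \leq (en/(a+b))^{a+b}$ together with $a\leq a+b\leq 3a$ to obtain
\[
\log\!\left(\binom{n}{a}\binom{n-a}{b}\right) \;\leq\; 3a\log(en/a) + 3a\log 2.
\]
Since $a\geq 6n/\ln n$ forces $\log(en/a)\leq \log\ln n + O(1)$, this is at most $3a\log\ln n+O(a)$.

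For the exponent I would use the elementary identities
\[
\binom{n}{d}-\binom{n-a}{d} = \sum_{i=0}^{a-1}\binom{n-1-i}{d-1}\;\geq\; a\binom{n-a}{d-1}, \qquad \binom{a+b}{d}-\binom{b}{d}\;\leq\; a\binom{a+b}{d-1}\;\leq\; a\binom{3a}{d-1},
\]
to get $N\geq a\bigl(\binom{n-a}{d-1}-\binom{3a}{d-1}\bigr)$. The choice $a\leq n/3^d$ forces $3a/(n-a)\leq 3/(3^d-1)$, hence $\binom{3a}{d-1}/\binom{n-a}{d-1}\leq \eta_d < 1$ for an explicit constant $\eta_d$ depending only on $d$; thus $N\geq (1-\eta_d)\,a\binom{n-a}{d-1}$. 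Combining with $p\binom{n-a}{d-1}\geq (1-a/n)^{d-1}(\ln n+\omega)(1-o(1))$ and $|\omega|\leq \ln\ln\ln n$ yields $pN\geq c_d\,a\ln n$ for some constant $c_d=c_d(d)>0$.

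Putting the two bounds together, each summand of $E_1$ is at most $\exp\bigl(3a\log\ln n+O(a)-c_d\,a\ln n\bigr)=\exp(-\Omega(a\ln n))$ for large $n$, and on the range $a\geq 6n/\ln n$ this is $\exp(-\Omega(n))$. Summing the at most $O(n^2)$ pairs $(a,b)$ yields $E_1=O(n^2 e^{-\Omega(n)})\to 0$. The main obstacle is calibrating the cutoff $a\leq n/3^d$ so that the subtraction $a\binom{3a}{d-1}$ eats only a small, $d$-controlled fraction of the main term $a\binom{n-a}{d-1}$; once $c_d$ is pinned down as a positive constant, success follows effortlessly because $\ln n$ dwarfs $\log\ln n$.
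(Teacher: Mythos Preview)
Your proof is correct and, in fact, cleaner than the paper's. Both arguments begin with the trivial bound $P(a,b)\le 1$ and the inequality $q^N\le e^{-pN}$, but they diverge from there. The paper first shows that, for each fixed $a$, the $b=2a$ term dominates the inner sum (ratio of consecutive terms $\ge 3$); it then expands the binomials in the exponent into a polynomial and writes the summand as $\exp(a\,g_n(a))$, proves $g_n$ is increasing on $[6n/\ln n,\,n/3^d]$ by computing $g_n'(a)$, and finally evaluates $g_n(n/3^d)\le -(\ln n)/d$. Your route bypasses all of this calculus: the hockey-stick identity gives the clean two-sided estimate $N\ge a\bigl(\binom{n-a}{d-1}-\binom{3a}{d-1}\bigr)$ directly, and the cutoff $a\le n/3^d$ immediately makes the subtracted term a $d$-dependent fraction $\eta_d<1$ of the main term, so that $pN\ge c_d\,a\ln n$. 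Meanwhile the multinomial rewrite $\binom{n}{a}\binom{n-a}{b}=\binom{n}{a+b}\binom{a+b}{a}\le (en/a)^{3a}2^{3a}$ handles all $b\in[0,2a]$ at once, with log at most $3a\log\ln n+O(a)$ because $a\ge 6n/\ln n$. The paper's approach yields the slightly sharper endpoint value $-(\ln n)/d$, but that precision is never used downstream; your argument trades it for a shorter, more transparent proof that avoids the derivative computation and the polynomial expansion of the binomial differences.
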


\begin{proof}
For ease of notation, let $\nu_0:=6n/ \ln n$, $D=3^d$ and $\nu_1:=n/D.$ Using the trivial bound $P(a,b) \leq 1$, we have that  
\begin{equation*}
E_1 \leq \sum_{a=\nu_0}^{\nu_1} \sum_{b=0}^{2a} {n \choose a}{n \choose b} q^{ {n \choose d}-{n-a\choose d} - {a+b \choose d} + {b \choose d}} =: \sum_{a,b} \gamma_b^a.
\end{equation*}
We will first prove that uniformly over $a$, the $b=2a$ term, $\gamma_{2a}^a,$ dominates the other $b$ terms, by showing that the ratio of consecutive terms is at least 3. Then, by a more careful analysis, we prove that the sum of $\gamma_{2a}^a$ tend to zero.

Uniformly over $a \in [\nu_0, \nu_1],$ with $n$ sufficiently large,
\begin{equation*}
\frac{\gamma_{b+1}^a}{\gamma_b^a} = \frac{n-b}{b+1} q^{ -{a+b \choose d-1} + {b \choose d-1} } \geq \frac{n- 2\frac{n}{D} }{2\frac{n}{D} +1} \cdot 1 \geq  3;
\end{equation*}
whence
\begin{equation*}
E_1 = \sum_{a=\nu_0}^{\nu_1} \sum_{b=0}^{2a} \gamma_b^a \leq \sum_{a=\nu_0}^{\nu_1} \gamma_{2a}^a  \sum_{b=0}^{2a} \left(\frac{1}{3}\right)^{2a-b} \leq \frac{3}{2} \sum_{a=\nu_0}^{\nu_1} \gamma_{2a}^a.
\end{equation*}
Now using the inequalities $q \leq e^{-p}$, ${n \choose a} \leq \left( \frac{e \, n }{a} \right)^a$ and ${n \choose 2a} \leq \left( \frac{e \, n }{2a} \right)^{2a}$, after some simplification we get
\begin{equation}\label{eqn 463}
E_1 \leq  \frac{3}{2} \sum_{a=\nu_0}^{\nu_1} \exp \left( a \ln\left( \frac{e^3 n^3}{4 a^3} \right)  - p \left( {n \choose d}-{n-a \choose d} - {3a \choose d} + {2a \choose d} \right) \right).
\end{equation}
Using the fact that $(m)_d =m(m-1)\dots (m-d+1)= m^d - {d \choose 2}m^{d-1}+O(m^{d-2})$, the dominant terms in the first two binomial terms in \eqref{eqn 463} can be found by
\begin{align*}
d! \left( {n \choose d}-{n-a \choose d} \right) &= (n)_d - (n-a)_d \\ &= n^d - (n-a)^d - {d \choose 2} \left( n^{d-1} - (n-a)^{d-1} \right) +O(n^{d-2}) \\ &= -\sum_{i=1}^d {d \choose i} (-a)^{i}n^{d-i} + {d \choose 2} \sum_{i=1}^{d-1}{d-1 \choose i} (-a)^{i} n^{d-1-i} + O(n^{d-2}).
\end{align*}
Using a similar approximation for the remaining two binomial expressions as well as using the fact that the error in the exponent $p n^{d-2} \to 0$, \eqref{eqn 463} becomes 
\begin{equation}\label{eqn 464}
E_1 \leq 2 \sum_{a=\nu_0}^{\nu_1} \exp \left( a \cdot g_n(a)\right),
\end{equation}
where 
\begin{align*}
g_n(a) &:= \ln \left( \frac{e^3 n^3}{4 a^3}  \right) - \frac{p}{d!} \left(   \sum_{i=1}^{d} {d \choose i} (-a)^{i-1} n^{d-i}\right) + \frac{p}{d!} \left( {d \choose 2} \sum_{i=1}^{d-1} {d-1 \choose i} (-a)^{i-1} n^{d-1-i} \right) \\& -\frac{p}{d!} \left(  \left(2^d - 3^d\right) a^{d-1} + { d\choose 2} \left(3^{d-1} - 2^{d-1} \right) a^{d-2} \right). 
\end{align*}
We will  show that this function $g_n(a)$ is increasing, for $a \in [\nu_0, \nu_1]$, by computing $g_n'(a)$. But first, taking this fact for granted, let's show that $E_1 \to 0$. 

Note that for any $\alpha \in (0,1)$ fixed (eventually $\alpha = 1/3^d$), we have that
\begin{equation*}
g_n(\alpha n) = - \frac{p}{d!} \left( \sum_{i=1}^d {d \choose i} (-\alpha)^{i-1} n^{d-1} \right) + \frac{p}{d!} (3^d-2^d) \left( \alpha n\right)^d + O(1).
\end{equation*}
In particular,
\begin{equation*}
g_n(\alpha n) \leq \frac{p n^{d-1}}{d!} \left( - d + \sum_{i=2}^d {d \choose i} (\alpha)^{i-1} + \alpha^{d-1} 3^d \right) + O(1);
\end{equation*}
whence
\begin{equation*}
g_n(\alpha n) \leq  \frac{p n^{d-1}}{d!} \left( - d + \alpha \, 2^d + \alpha \, 3^d \right) + O(1);
\end{equation*}
By choosing $\alpha = 1/3^d$, we have that
\begin{equation*}
g_n(n/3^d) \leq  \frac{p n^{d-1}}{d!} \left( - d + 5/3 \right) + O(1),
\end{equation*}
and uniformly over $p$, 
\begin{equation*}
g_n(n/3^d) \leq - (\ln n)/d.
\end{equation*}
Therefore
\begin{equation*}
E_1 \leq 2 \sum_{a=\nu_0}^{\nu_1} \exp \left( -a(\ln n)/d\right) = 2 \sum_{a=\nu_0}^{\nu_1} \frac{1}{n^{a/d}} \to 0,
\end{equation*}
as desired. 

All that remains of the proof of this lemma is to show that $g_n$ is indeed increasing on $[\nu_0, \nu_1]$, which we do by proving that $g_n'(a) \geq (\ln n)/(3n)$ for all $a$ in this range. Uniformly over $a$ and $p$, we have that
\begin{equation*}
g_n'(a) = \frac{-3}{a} +\frac{p}{d!} \left(  \sum_{i=2}^{d} {d \choose i} (i-1) (-a)^{i-2} n^{d-i} \right)  -\frac{p}{d!} \left(  \left(2^d - 3^d\right) (d-1) a^{d-2}  \right) + o(\ln n/n).
\end{equation*}
Neglecting some of the positive terms as well as bounding $a$ between $\nu_0$ and $\nu_1$, we find that
\begin{equation*}
g_n'(a) \geq \frac{-3}{\nu_0} + \frac{p}{d!} \left(  {d \choose 2}n^{d-2} -  \sum_{i=3}^d {d \choose i} (i-1) (\nu_1)^{i-2}n^{d-i} \right) + o(\ln n/n).
\end{equation*}
To deal with this sum, note that 
\begin{equation*}
\sum_{i=3}^d {d \choose i} (i-1) (\nu_1)^{i-2} n^{d-i} \leq \nu_1 \, n^{d-3} \sum_{i=3}^d {d \choose i} i \leq \frac{n^{d-2}}{3^d} \, d \, 2^{d-1}.
\end{equation*}
Using this inequality and the fact that $p n^{d-1}/(d-1)! = \ln n + o(\ln n)$, we find that 
\begin{equation*}
g_n'(a) \geq \frac{-3}{6n/(\ln n)} + \frac{\ln n}{d \, n} \left( {d \choose 2} - \frac{d \, 2^{d-1}}{3^d} \right) + o(\ln n/n);
\end{equation*}
whence
\begin{equation*}
g_n'(a) \geq \frac{\ln n}{2n} \left( -1+ (d-1) - \left(2/3\right)^d \right) + o(\ln n/n) \geq \frac{\ln n}{3n},
\end{equation*}
since $d\geq 3$, which concludes the proof that $E_1 \to 0$. 
\end{proof}

\subsection{Bounding $P(a,b)$}

\begin{lemma}
Let $A$, $a:=|A|$, and $B$, $b:=|B|$, be disjoint sets of vertices, where $a$, $b \geq 1$ and $a+b \geq d$. Suppose each of the ${a + b \choose d}$ potential edges of cardinality $d$ is present with probability $p\in(0,1)$ independently of one another. Then the probability each vertex of $B$ is adjacent to $A$, denoted $P(a,b)$, is bounded above by
\begin{equation*}
P(a,b) \leq \left( 1 - q^{ {a + b - 1 \choose d-1} - {b-1 \choose d-1} } \right)^{ \lceil \frac{b}{d-1} \rceil}.
\end{equation*}
\end{lemma}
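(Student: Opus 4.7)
The plan is to bound $P(a,b)$ by the probability that a greedy edge-finding procedure succeeds for $k := \lceil b/(d-1) \rceil$ rounds. Write $N := \binom{a+b-1}{d-1} - \binom{b-1}{d-1}$ for the number of $d$-subsets of $A \cup B$ that contain a fixed vertex of $B$ and meet $A$, so that any single vertex of $B$ is adjacent to $A$ with marginal probability exactly $1 - q^N$. I would pick vertices $v_1, v_2, \ldots, v_k \in B$ one at a time, requiring $v_i \in B \setminus \bigcup_{j<i}(e_j \cap B)$, where $e_j$ is the witness edge chosen at step $j$. Because each $e_j$ can absorb at most $d-1$ vertices of $B$, a simple counting argument shows the process can be carried out for the full $k$ rounds before $B$ is exhausted.

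At each step $i$ I would carve out a batch $T_i$ of potential edges, all of which contain $v_i$ and meet $A$, and arrange that $T_1, \ldots, T_k$ are pairwise disjoint as sets of potential edges. Disjointness guarantees that the events ``some edge of $T_i$ is present'' are mutually independent. Since $T_i$ is always contained in the $N$ potential witness edges for $v_i$, we have $|T_i| \leq N$, and so each such event has probability $1 - q^{|T_i|} \leq 1 - q^N$. Multiplying yields
\[
\prod_{i=1}^{k}\bigl(1 - q^{|T_i|}\bigr) \leq \bigl(1 - q^N\bigr)^{k},
\]
which would complete the argument provided the event ``every vertex of $B$ is adjacent to $A$'' is contained in ``the algorithm succeeds in all $k$ rounds.''

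The main obstacle lies precisely in arranging the batches $T_i$ so that this last inclusion holds. The two requirements --- (i) disjointness across $i$, needed for independence, and (ii) every $T_i$ contains a present edge whenever the global event occurs --- tug in opposite directions: the most natural disjoint choice, taking $T_i$ to consist of the edges containing $v_i$, meeting $A$, and avoiding $\{v_1, \ldots, v_{i-1}\}$, can violate (ii), because the only witness available for $v_i$ in the hypergraph might share vertices with an earlier $v_j$ and thereby sit outside $T_i$. Threading this needle --- keeping the batches narrow enough to be disjoint yet broad enough that the covering structure of a globally successful configuration forces a present edge into each one --- is the combinatorial heart of the argument and is where the greedy edge-finding algorithm alluded to in the paper's sketch does its real work. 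Once such a calibration is achieved, the three ingredients combine to give $P(a,b) \leq (1-q^N)^{\lceil b/(d-1)\rceil}$.
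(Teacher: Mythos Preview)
Your high-level plan matches the paper's: run a greedy procedure that picks $v_i \in B \setminus \bigcup_{j<i} e_j$, find a witness edge $e_i$ through $v_i$ into $A$, and note that on the event in question this must succeed for $k=\lceil b/(d-1)\rceil$ rounds. But you have not actually closed the gap you yourself flag, and the way you frame the difficulty (searching for deterministic disjoint batches $T_i$) is not how it is resolved.

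The paper does not try to predefine disjoint edge sets. Instead it uses a sequential-exposure (deferred-decisions) argument: at step $i$, examine one by one the edges through $v_i$ into $A$ \emph{that have not been examined at any earlier step}, stopping at the first present one. Conditioned on the full history (which edges have been examined and their outcomes, hence also $v_1,\ldots,v_i$), the still-unexamined edges remain independent Bernoulli$(p)$. If $M_i$ of the $N$ edges through $v_i$ into $A$ were examined earlier, the conditional success probability is exactly $1-q^{\,N-M_i}\le 1-q^N$, and the chain rule gives the product bound.

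The one-line observation that makes this work --- and that your proposal is missing --- is why the global event forces success at step $i$. Any edge examined at an earlier step $j$ is either known to be absent, or is $e_j$ itself (since step $j$ halts at the first present edge it sees). But $v_i\notin e_j$ by the choice of $v_i$, so no earlier-examined edge through $v_i$ can be present. Hence if $v_i$ has any present witness into $A$, that witness lies among the \emph{unexamined} edges and will be found. This is precisely the ``threading of the needle'' you defer; once you see it, there is no need to manufacture disjoint batches at all. Your candidate $T_i$ (edges through $v_i$ avoiding $\{v_1,\ldots,v_{i-1}\}$) is strictly smaller than the set of unexamined edges through $v_i$ and, as you correctly note, can miss the only witness; the paper's set cannot.
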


\begin{proof}
We introduce a greedy edge finding algorithm which finds at least $\lceil b/(d-1) \rceil$ edges. Analyzing this algorithm will deliver our bound on $P(a,b)$. The algorithm is as follows. Let $w_1, \ldots, w_b$ be the vertices of $B$.
\begin{enumerate}
\item We begin at vertex $v_1:=w_1$. Check the potential edges, one after another, containing $w_1$ and at least one vertex of $A$. We stop checking the moment that we find the first present edge, which we denote $e_1$, and go to step 2.
\item Let $v_2$ be the lowest index vertex of $B \setminus e_1$. Check the potential edges, one after another, containing $v_2$ and at least one vertex of $A$ (without looking at any previously checked present or non-present edges). We stop checking the moment that we find the first present edge, which we denote $e_2$, and go to step 3.
\item Let $v_3$ be the lowest index vertex of $B \setminus (e_1 \cup e_2)$. Again we check the potential edges containing $v_3$ and at least one vertex of $A$ one after another. And so on until $B \setminus (e_1 \cup e_2 \cup \ldots)$ is empty.
\end{enumerate}

If each vertex of $B$ is adjacent to at least one vertex of $A$, then in the $k^{th}$ step, as long as $B\setminus (e_1 \cup \ldots \cup e_{k-1})$ is non-empty, we must find at least one edge containing $v_k$. Furthermore, each new found edge $e_i$ can contain at most $(d-1)$ vertices of $B$, since at least one of $e_i$'s vertices is in $A$. So on the event corresponding to $P(a,b)$, we must find at least $\lceil b/(d-1) \rceil$ edges; in other words, 
\begin{equation*}
P(a,b) \leq P(\text{we find }e_1, e_2, \ldots, e_{\lceil b/(d-1) \rceil} \text{ in the greedy algorithm}).
\end{equation*}
For ease in bounding the probability on the right, let $P_1:=P(\text{we find }e_1)$ and for $j\geq 2$,
\begin{equation*}
P_j := P(\text{find }e_j | \text{we find }e_1, \ldots, e_{j-1}).
\end{equation*}
Since we need to find $e_1, \ldots, e_{j-1}$ in order to find $e_j$, the bound on $P(a,b)$ above becomes
\begin{equation*}
P(a,b) \leq P_1 \cdot P_2 \cdots P_{\lceil b/(d-1) \rceil}.
\end{equation*}
To finish the proof of the lemma, it is sufficient to show that for each $j$,
\begin{equation}\label{eqn: 9.26.3}
P_j \leq 1- q^{{a+b-1 \choose d-1} - {b-1 \choose d-1}}.
\end{equation}
For $j=1$, we find $e_1$ iff at least one of the ${a+b-1 \choose d-1}-{b-1 \choose d-1}$ potential edges containing $v_1$ is present; so $P_1$ actually attains equality in \eqref{eqn: 9.26.3}. 

Now let's calculate $P_j$ for $j \geq 2$. Let $M_j$ be the (random) number of previously checked edges containing $v_j$ that we know are not present before the $j^{th}$ step. So at the beginning of the $j^{th}$ step of the process, we have ${a+b-1 \choose d-1}-{b-1 \choose d-1}-M_j$ unchecked edges containing $v_j$ and at least one vertex of $A$. So
\begin{align*}
P_j &= \sum_{m=0}^{{a+b-1 \choose d-1}-{b-1 \choose d-1}} P(\text{find }e_j, M_j = m | \text{ we find }e_1, \ldots, e_{j-1}) \\&= \sum_m P(\text{find }e_j | M_j =m) \cdot P(M_j =m | \text{ we find }e_1, \ldots, e_{j-1}),
\end{align*}
and in particular,
\begin{equation*}
P(\text{find }e_j | M_j=m) = 1 - q^{{a+b-1 \choose d-1}-{b-1 \choose d-1}-m} \leq 1-q^{{a+b-1 \choose d-1}}.
\end{equation*}
Using this inequality above yields the bound \eqref{eqn: 9.26.3}.
\end{proof}

To bound $P(a,b)$ for the range of $p$, $a$ and $b$ that we care about, we'll use the following bound.

\begin{corollary}\label{pab bound}
If $a \geq d, b \in [1,2a]$ such that $2a \, p^{\frac{1}{d-1}} \leq 1$, then
\begin{equation*}
P(a,b) \leq \left( 2 a \, p^{\frac{1}{d-1}} \right)^b. 
\end{equation*}
\end{corollary}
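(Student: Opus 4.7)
The plan is to start from the bound given by the preceding lemma and massage it into the desired form through two estimates.

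First, by Bernoulli's inequality $q^{N} = (1-p)^{N} \ge 1 - Np$ (valid since $p \in (0,1)$ and $N$ is a positive integer), the lemma immediately yields
\[
P(a,b) \le (Np)^{\lceil b/(d-1)\rceil}, \qquad N := \binom{a+b-1}{d-1} - \binom{b-1}{d-1}.
\]

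Second, I would establish the purely combinatorial bound $N \le (2a)^{d-1}$. Interpreting $N$ as the number of $(d-1)$-subsets of an $(a+b-1)$-set that intersect a distinguished $a$-subset, a union bound over the chosen element of that $a$-subset gives $N \le a\binom{a+b-2}{d-2}$. Because $b \le 2a$ and $a \ge d$, this simplifies to $N \le \tfrac{3^{d-2}}{(d-2)!}\,a^{d-1}$, and a short induction on $d$ confirms $3^{d-2}/(d-2)! \le 2^{d-1}$ for every $d \ge 3$, yielding the claim.

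Finally, set $\lambda := 2a\,p^{1/(d-1)} \in (0,1]$. Then $Np \le (2a)^{d-1}p = \lambda^{d-1}$, so
\[
(Np)^{\lceil b/(d-1)\rceil} \le \lambda^{(d-1)\lceil b/(d-1)\rceil} \le \lambda^{b},
\]
using $\lambda \le 1$ together with $(d-1)\lceil b/(d-1)\rceil \ge b$ in the last step.

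The real obstacle, and what dictates this particular order of moves, is getting the exponents to line up. One must package $Np$ as $\lambda^{d-1}$ \emph{before} raising to the $\lceil b/(d-1)\rceil$-th power, so that the factor which is $\le 1$ (namely $\lambda$) ends up with an exponent exceeding $b$, making the subsequent exponent reduction a legitimate upper bound. Splitting $N^{\lceil b/(d-1)\rceil} \cdot p^{\lceil b/(d-1)\rceil}$ separately and then using $p^{\lceil b/(d-1)\rceil} \le p^{b/(d-1)}$ leaves $(2a)^{(d-1)\lceil b/(d-1)\rceil}$, which goes the wrong way since $2a \ge 1$. Beyond this bookkeeping, the combinatorial inequality $N \le (2a)^{d-1}$ and the elementary check $3^{d-2}/(d-2)! \le 2^{d-1}$ are entirely routine.
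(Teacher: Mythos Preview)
Your proof is correct and follows exactly the paper's approach: apply Bernoulli to get $(Np)^{\lceil b/(d-1)\rceil}$, bound $N \le (2a)^{d-1}$, and then use $Np=\lambda^{d-1}\le 1$ to replace $(d-1)\lceil b/(d-1)\rceil$ by $b$ in the exponent. The only difference is that the paper simply asserts $\binom{a+b-1}{d-1}-\binom{b-1}{d-1}\le (2a)^{d-1}$, whereas you supply the details via the union bound $N\le a\binom{a+b-2}{d-2}$ and the check $3^{d-2}/(d-2)!\le 2^{d-1}$.
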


\begin{proof}
Using Bernoulli's Inequality and then the fact that ${a+b-1\choose d-1} - {b-1 \choose d-1} \leq a^{d-1} 2^{d-1}$, we have that 
\begin{align*}
P(a,b) &\leq \left( 1 - (1-p)^{{a+b-1 \choose d-1}-{b-1 \choose d-1}}\right)^{\lceil \frac{b}{d-1} \rceil} \\
& \leq \left( \left( {a+b-1\choose d-1} - {b-1 \choose d-1} \right) p \right)^{\lceil \frac{b}{d-1} \rceil} \leq \left( a^{d-1} 2^{d-1} p \right)^{\lceil \frac{b}{d-1} \rceil}.
\end{align*}
By hypothesis, $a^{d-1} 2^{d-1} p \leq 1$, so we can take off the ceilings in the above statement to obtain the desired inequality.
\end{proof}

\subsection{No non-expanding sets of size in $[n^{1/4}, 6n/\ln n]$}

\begin{lemma}\label{no medium}
Uniformly over $|\omega| \leq \ln \ln \ln n$ and $p=\frac{(d-1)! (\ln n + \omega)}{n^{d-1}}$,
\begin{equation}\label{eqn 451}
E_2 :=\sum_{a=n^{1/4}}^{6n/\ln n} \sum_{b=0}^{2a} {n \choose a}{n-a \choose b} P(a,b) q^{{n \choose d}-{n-a \choose d}-{a+b \choose d}+{b \choose d}} \to 0.
\end{equation}
\end{lemma}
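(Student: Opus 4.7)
The plan is to follow the structure of Lemma \ref{no large}, but replace the crude bound $P(a,b)\le 1$ by the sharper estimate of Corollary \ref{pab bound}. This is essential because when $a$ may be as small as $n^{1/4}$, the factor $\binom{n-a}{b}$ can be as large as $n^{2a}$, and the $q^{N_{a,b}}$ savings alone cannot absorb it.

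First I would check the hypothesis of Corollary \ref{pab bound}: since $np^{1/(d-1)}=O((\ln n)^{1/(d-1)})$ and $a\le 6n/\ln n$, we have $2a\,p^{1/(d-1)}=O((\ln n)^{-(d-2)/(d-1)})\to 0$ for $d\ge 3$, so eventually $P(a,b)\le (2a\,p^{1/(d-1)})^b$ uniformly over the summation range. Next, I would control the exponent $N_{a,b}:=\binom{n}{d}-\binom{n-a}{d}-\binom{a+b}{d}+\binom{b}{d}$ by writing
\begin{equation*}
\binom{n}{d}-\binom{n-a}{d}=\sum_{i=0}^{a-1}\binom{n-1-i}{d-1}\ge a\binom{n-a}{d-1},\qquad \binom{a+b}{d}-\binom{b}{d}=\sum_{i=0}^{a-1}\binom{b+i}{d-1}\le a(3a)^{d-1}/(d-1)!.
\end{equation*}
Multiplying by $p$, the first lower bound gives $a(\ln n+\omega)(1+o(1))$, while the second yields $O(a^d\ln n/n^{d-1})$, which is $o(a\ln n)$ uniformly since $(a/n)^{d-1}\to 0$ throughout the range. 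Combined with $|\omega|\le\ln\ln\ln n$, this produces $pN_{a,b}\ge a\ln n\,(1+o(1))$, hence $q^{N_{a,b}}\le n^{-a(1+o(1))}$.

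Using $\binom{n}{a}\le(en/a)^a$ and $\binom{n-a}{b}\le(en/b)^b$, each summand is bounded for $b\ge 1$ by
\begin{equation*}
T_{a,b}\le (en/a)^a\,\bigl(2ean\,p^{1/(d-1)}/b\bigr)^b\,n^{-a(1+o(1))}.
\end{equation*}
Writing $K:=2ean\,p^{1/(d-1)}=O(a(\ln n)^{1/(d-1)})$, the inner sum $\sum_{b=0}^{2a}(K/b)^b$ (with the $b=0$ term interpreted as $1$) is maximized near $b=K/e$ at value $e^{K/e}$, and a standard Laplace-type estimate gives a total of $O(\sqrt{K}\,e^{K/e})$. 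Therefore
\begin{equation*}
\sum_{b=0}^{2a}T_{a,b}\le \exp\!\bigl(-a\ln a+O\!\bigl(a(\ln n)^{1/(d-1)}\bigr)+o(a\ln n)\bigr).
\end{equation*}
Since $d\ge 3$ forces $(\ln n)^{1/(d-1)}=o(\ln n)$, and $a\ge n^{1/4}$ gives $\ln a\ge(\ln n)/4$, the right-hand side is at most $n^{-a/5}$ for $n$ large.

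Finally the outer sum over $a\in[n^{1/4},6n/\ln n]$ has consecutive-term ratio $n^{-1/5+o(1)}\to 0$ and is thus dominated by its first term $n^{-n^{1/4}/5}\to 0$, giving $E_2\to 0$. The most delicate step will be uniformly estimating $pN_{a,b}$ across the entire range of $(a,b)$—in particular verifying that the subtractive $\binom{a+b}{d}-\binom{b}{d}$ contribution is negligible compared to $a\ln n$ even at the upper end $a\asymp n/\ln n$; once this is in place, the separation $(\ln n)^{1/(d-1)}\ll \ln n$ (which uses $d\ge 3$) drives the $-a\ln a$ term to dominate, and the rest is routine bookkeeping.
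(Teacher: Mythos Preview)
Your proof is correct and reaches the same $n^{-a/5}$ bound as the paper, but the organization differs in two places.

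For the $q$-exponent, the paper expands $N_{a,b}$ into a polynomial in $a$, packages everything into a function $f_n(a)$, and then proves $f_n$ is convex on $[\bar\nu_0,\bar\nu_1]$ via a second-derivative computation, so that $\max f_n$ is attained at an endpoint. Your telescoping bounds $\binom{n}{d}-\binom{n-a}{d}\ge a\binom{n-a}{d-1}$ and $\binom{a+b}{d}-\binom{b}{d}\le a(3a)^{d-1}/(d-1)!$ are more elementary and sidestep the convexity argument entirely; this is a genuine simplification.

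For the $b$-sum, the paper computes the ratio $\gamma_{b+1}^a/\gamma_b^a\ge (\ln n)^{1/(d-1)}$ and concludes the sum is dominated by the single term $b=2a$. You instead bound $\sum_{b\le 2a}(K/b)^b$ by the Laplace value $O(\sqrt K\,e^{K/e})$. One remark: since $K/e=\Theta\bigl(a(\ln n)^{1/(d-1)}\bigr)\gg 2a$, the maximizer of $(K/b)^b$ over your actual range $b\le 2a$ is at the boundary $b=2a$, not at $K/e$; your Laplace bound is really being applied to the extended infinite sum. This is still a valid upper bound, just looser than necessary---it produces $\exp\bigl(O(a(\ln n)^{1/(d-1)})\bigr)$ whereas the paper's $b=2a$ term gives $\exp\bigl(O(a\ln\ln n)\bigr)$. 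Both are $o(a\ln n)$ when $d\ge 3$, so nothing is lost in the end.

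In summary: same ingredients (Corollary~\ref{pab bound}, the bounds $\binom{n}{a}\le(en/a)^a$, $q\le e^{-p}$), but you trade the paper's convexity calculation for cruder---yet sufficient---direct estimates.
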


\begin{proof}
For ease of notation, let $\bar{\nu}_0 = n^{1/4}$ and $\bar{\nu}_1 = 6n/ \ln n$. For $a \in [\bar{\nu}_0, \bar{\nu}_1]$, we have that $2 a \, p^{1/(d-1)} <1$, so by applying our bound for $P(a,b)$ from Corollary \ref{pab bound} to \eqref{eqn 451}, we obtain that
\begin{equation*}
E_2 \leq \sum_{a=\bar{\nu}_0}^{\bar{\nu}_1}  \sum_{b=0}^{b=2a} {n \choose a} { n-a \choose b} \left(2 a p^{\frac{1}{d-1}} \right)^b q^{ { n \choose d} - {n-a \choose d} - {a+b \choose d} + {b \choose d} }.
\end{equation*}
Now using the inequalities ${k \choose \ell} \leq (e k / \ell)^{\ell}$ on the first two binomial terms as well as $q \leq e^{-p}$, we find that
\begin{equation*}
E_2 \leq  \sum_{a=\bar{\nu}_0}^{\bar{\nu}_1}  \sum_{b=0}^{b=2a} \left( \frac{e n}{a} \right)^a \left( \frac{ e n }{b} \right)^b  \left(2 a p^{\frac{1}{d-1}} \right)^b e^{-p\left( { n \choose d} - {n-a \choose d} - {a+b \choose d} + {b \choose d} \right)} =: \sum_{a,b} \gamma_b^a.
\end{equation*}
We will bound this sum of $\gamma_a^b$ in two steps. As before, the term $\gamma_{2a}^a$ will be shown to dominate the other $\gamma_b^a$; then we will prove that $\gamma_{2a}^a \leq 3/n^{a/5}$. 

Computing the ratio of consecutive terms, we find that
\begin{align*}
\frac{\gamma_{b+1}^a}{\gamma_b^a} &= \frac{e n}{b+1} \left( \frac{b}{b+1} \right)^b \left( 2 a p^{1/(d-1)} \right) e^{p\left( {a+b \choose d-1}-{b \choose d-1}\right)} \geq \frac{n}{2a} \left( 2 a  p^{1/(d-1)} \right) \\ &\geq (\ln n)^{1/(d-1)}.
\end{align*}
Therefore, for each $a \in [\bar{\nu}_0, \bar{\nu}_1]$, we have that
\begin{equation*}
\sum_{b=0}^{2a} \gamma_b^a \leq \gamma_{2a}^a \sum_{k=0} \left( \ln n \right)^{-k/(d-1)} \leq 2 \gamma_{2a}^a,
\end{equation*}
for $n$ sufficiently large. Therefore
\begin{equation*}
E_2 \leq 2\sum_{a=\bar{\nu}_0}^{\bar{\nu}_1} \left( \frac{e n }{a} \right)^a \left( \frac{e n }{2a} \right)^{2a}  \left(2 a p^{\frac{1}{d-1}} \right)^{2a} e^{-p \left( { n \choose d} - {n-a \choose d} - {3a \choose d} + {2a \choose d}\right) }, 
\end{equation*}
or equivalently
\begin{equation}\label{eqn 9.26.1}
E_2 \leq 2 \sum_{a=\bar{\nu}_0}^{\bar{\nu}_1}\exp \left( a \ln \left( \frac{e^3 n^3}{a} p^{\frac{2}{d-1}} \right) - p\left( { n \choose d} - {n-a \choose d} - {3a \choose d} + {2a \choose d}\right) \right). 
\end{equation}
If we approximate the non-negligible terms in the binomial expressions in \eqref{eqn 9.26.1} just as we did for \eqref{eqn 464}, we obtain that 
\begin{equation}\label{eqn 9.26.2}
E_2 \leq 3 \sum_{a=\bar{\nu}_0}^{\bar{\nu}_1} \exp \left( a \, f_n(a) \right),
\end{equation}
where
\begin{align*}
f_n(a) &:= \ln \left( \frac{e^3 n^3 p^{\frac{2}{d-1}}}{a}  \right) - \frac{p}{d!} \left(  \sum_{i=1}^{d} {d \choose i} (-a)^{i-1} n^{d-i} - {d \choose 2} \sum_{i=1}^{d-1} {d-1 \choose i} (-a)^{i-1} n^{d-1-i} \right) \\& -\frac{p}{d!} \left(  \left(2^d - 3^d\right) a^{d-1} + { d\choose 2} \left(3^{d-1} - 2^{d-1} \right) a^{d-2} \right). 
\end{align*}

To bound $E_2$, we will show that $f_n$ is convex on $[\bar{\nu}_0, \bar{\nu}_1].$ But first, let's take this fact for granted right now and show that $E_2 \to 0$. Convexity of $f_n$ implies that
\begin{equation*}
\max_{a \in [\bar{\nu}_0, \bar{\nu}_1]} f_n(a) \leq \max \{f_n(\bar{\nu}_0), f_n(\bar{\nu}_1)\}.
\end{equation*}
Computing these last two values, we find that
\begin{align*}
f_n(\bar{\nu}_0) &= -\frac{1}{4}\ln n + O(\ln \ln n) \leq -\frac{1}{5}\ln n, \\ 
f_n(\bar{\nu}_1) &= - \ln n + O(\ln \ln n) \leq -\frac{1}{5}\ln n.
\end{align*}
Hence \eqref{eqn 9.26.2} becomes
\begin{equation*}
E_2 \leq 3 \sum_{a=\bar{\nu}_0}^{\bar{\nu}_1} \exp \left( a \, \max_a f_n(a) \right) \leq 3\sum_{a=\bar{\nu}_0}^{\bar{\nu}_1} n^{-a/5} \to 0.
\end{equation*}

All that remains is to show that $f_n$ is actually convex. This is done by computing $f_n''$ and showing that $f_n'' > 0$ uniformly. Note that
\begin{align*}
f_n''(a) &= \frac{1}{a^2} - \frac{p}{d!} \left(   \sum_{i=3}^{d} {d \choose i} (i-1)(i-2) (-a)^{i-3} n^{d-i} \right) \\&+  \frac{p}{d!}\left( {d \choose 2} \sum_{i=3}^{d-1} {d-1 \choose i} (i-1)(i-2)(-a)^{i-3} n^{d-1-i} \right) \\& -\frac{p}{d!} (d-2) \left(  \left(2^d - 3^d\right) (d-1) a^{d-3} + { d\choose 2} \left(3^{d-1} - 2^{d-1} \right) (d-3) a^{d-4} \right), 
\end{align*}
where the second sum and the last term are understood to be zero when $d=3$. Uniformly over $a \in [\bar{\nu}_0, \bar{\nu}_1]$ all the terms other than the first are negligible and we have that
\begin{equation*}
f_n''(a) \geq \frac{1}{\bar{\nu}_1^2} + O\left( \frac{\ln n}{n^2}\right) > \frac{1}{37} \frac{(\ln n)^2}{n^2},
\end{equation*}
for sufficiently large $n$. Note that if the right endpoint, $\nu_1$, was larger and closer to the order of $n$, this leading order term in the above would be washed out by the error.
\end{proof}

\section{Completing the proof of the main result}\label{sec: increment}

\begin{theorem}
Let $c_n \to c \in \mathbb{R}$ and $p=(d-1)! \frac{\ln n + c_n}{n^{d-1}}$. W.h.p. $H_d(n,p)$ has a weak cycle that spans the non-isolated vertices. 
\end{theorem}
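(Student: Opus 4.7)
The plan is an edge-sprinkling argument starting from a long weak path at a slightly subcritical probability and iteratively extending it to a weak Hamilton cycle on the non-isolated vertices. Let $\omega_n := \ln\ln\ln n$ and $p_0 := (d-1)!(\ln n - \omega_n)/n^{d-1}$, and couple
\begin{equation*}
H_d(n,p) = H^{(0)} \cup S_1 \cup \cdots \cup S_N,
\end{equation*}
where $H^{(0)} \sim H_d(n, p_0)$, the $S_i \sim H_d(n, \pi)$ are independent sprinkles, and $N := \lceil 2^{d+3}\, n/\ln n \rceil$, with $\pi$ calibrated so that $(1-p_0)(1-\pi)^N = 1-p$. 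This gives $\pi = \Theta(\omega_n \ln n / n^d)$; a single sprinkle misses any prescribed family of $\geq Cn^d$ edges with probability at most $e^{-\pi Cn^d} = e^{-\Theta(\omega_n \ln n)} = o(n^{-2})$.

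In the base stage, Lemma~\ref{hyper de la vega} applied to $H^{(0)}$ with $\theta = \ln n - \omega_n$ w.h.p.\ delivers a weak path $P_0$ of length $h_0 \geq n - 2^{d+2} n/\theta = n - O(n/\ln n)$, which for large $n$ exceeds $n - n/3^d$. Applying Lemma~\ref{u H bound} to each intermediate $H^{(i)}$ (whose effective density parameter remains in $[-\omega_n, c_n + o(1)] \subseteq [-\omega_n, \omega_n]$ for large $n$) and union-bounding over the $N+1$ stages gives $u(H^{(i)}) \geq n/3^d$ for all $i \leq N$, w.h.p.

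The inductive extension proceeds as follows. At stage $i \to i+1$, let $h_i := |P_i|$. If $H^{(i)}$ contains no weak cycle of length $h_i + 1$, then Corollary~\ref{absent cd} applied with $D = 3^d$ supplies $\geq Cn^d$ beneficial absent edges, and $S_{i+1}$ contains at least one with probability $1 - o(n^{-2})$. Such an edge creates a weak cycle $C$ of length $h_i + 1$ in $H^{(i+1)}$, whose vertex set equals $V(P_i)$. Setting $W := [n] \setminus V(C)$ and $A := W \cap V_1(H^{(i+1)})$, we have $|A| \leq |W| = n - h_i - 1 < n/3^d \leq u(H^{(i+1)})$. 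If $A \neq \emptyset$, then $|N_{H^{(i+1)}}(A)| \geq 2|A|$; since isolated vertices contribute no neighbors, $N_{H^{(i+1)}}(A) \subseteq V(C)$, so some $v \in A$ is adjacent to $V(C)$, allowing me to break $C$ at that neighbor and produce a weak path $P_{i+1}$ of length $h_i + 1$. If instead $A = \emptyset$, then $C$ spans $V_1(H^{(i+1)})$; continued sprinkling only grows $V_1(H^{(j)})$, and the same analysis applied at a later stage eventually extends $C$ to span $V_1(H_d(n,p))$.

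The main obstacle is maintaining $u(H^{(i)}) \geq n/3^d$ uniformly across the $O(n/\ln n)$ intermediate stages, since a naive union bound over Lemma~\ref{u H bound} requires that each stage's failure probability dominate $(\ln n)/n$. For large $d$ the polynomial decay implicit in Lemmas~\ref{no small}, \ref{no medium} and \ref{no large} is already enough; for small $d$ (notably $d \in \{3,4\}$) I would refine the union bound, either by exploiting a monotonicity of $u$ restricted to the fixed non-isolated set $V_1(H_d(n,p))$ (and handling newly non-isolated vertices with a separate first-moment argument) or by observing that the bad events of successive stages are highly correlated. A final union bound over the base structure, the expansion bounds, and the $N = O(n/\ln n)$ sprinkle successes then shows that the process terminates w.h.p.\ in a weak cycle spanning $V_1(H_d(n,p))$.
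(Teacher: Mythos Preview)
Your overall architecture (de la Vega start, sprinkling, P\'osa rotations, expansion via $u\ge n/3^d$) matches the paper, but you have correctly flagged the step that is actually the crux, and you have not resolved it. The union bound over $N\sim n/\ln n$ applications of Lemma~\ref{u H bound} genuinely fails: for $d=3$ the small-set estimate in Lemma~\ref{no small} only gives failure probability $O((\ln n)^2/n^{1/3})$, far too weak to survive $n/\ln n$ repetitions. Your first proposed fix, monotonicity of $u$ on the fixed set $V_1(H_d(n,p))$, does not go through as stated: the induced hypergraph $(H^{(0)})_{V_1(H_d(n,p))}$ is \emph{not} connected (vertices in $V_1(H_d(n,p))\setminus V_1(H^{(0)})$ are isolated there), and the monotonicity of $u$ under edge addition requires connectivity. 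As sprinkles activate these vertices, small non-expanding sets can appear; for $d=3$, a single sprinkle edge containing two such vertices already yields a non-expanding pair.

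The paper avoids this altogether by working throughout the first run on the \emph{fixed} set $V_1(H^{(0)})$. On $\mathcal{E}$, the induced hypergraph $(H^{(0)})_{V_1(H^{(0)})}$ is connected, so $u$ is monotone along the run and Lemma~\ref{u H bound} needs to be invoked only once, at time zero. The price is that the cycle produced spans only $V_1(H^{(0)})$, not $V_1(H_d(n,p))$: vertices may become non-isolated during the $k_0\sim n/\ln n$ sprinkles. The paper handles this with a \emph{second} short run of $k_1=\lceil\ln n\rceil$ sprinkles, starting from $p_1=p-(\ln n)^3/n^d$: one first shows (Lemma just before the main one) that $V_0(H_d(n,p_1))=V_0(H_d(n,p))$ w.h.p., then repeats the argument on $V_1(H_d(n,p_1))$, where now the starting path (from the first-stage cycle) already has length $\ge n-\ln n$, so $\ln n$ increments suffice. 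Your single-stage scheme cannot close this loop without either the two-stage trick or a substantially more delicate analysis of newly non-isolated vertices, which you have sketched but not carried out.
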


We will prove this theorem through a couple of lemmas, which we outline here. We start with some edge probability $p_0$ relatively far below $p$ and incrementally increase our edge probability. After each increment, we will w.h.p. be assured of completing a longest path into a weak cycle; then, we break apart this weak cycle to increase the length of a longest path. After sufficiently many steps, increasing our edge probability to $p_1<p$, we will have w.h.p. a weak cycle in $H_d(n,p_1)$ spanning the non-isolated vertices of $H_d(n,p_0)$. Unfortunately, vertices that are isolated in $H_d(n,p_0)$ may possibly be non-isolated in $H_d(n,p_1)$. In fact, since we run so many steps from $p_0$ to $p_1$, there is a positive limiting probability that this happens. However, we can show that not many vertices become non-isolated in this way; in particular, w.h.p. $H_d(n,p_1)$ has a weak cycle on all but $\ln n$ of its non-isolated vertices. Next, we begin to incrementally increase the edge probability again, this time from $p_1$ to $p$, but now we run much fewer steps. Again, we show that $H_d(n,p)$ has a weak cycle on the non-isolated vertices of $H_d(n,p_1)$, but since $p_1$ is near enough to $p$, w.h.p. the isolated vertices of $H_d(n,p_1)$ stay isolated in $H_d(n,p)$. This completes the proof of the theorem.

Truth be told, we increment our hyperedge probability in two steps, unlike Bollob\'{a}s' proof \cite{BB} in the graph case which increments in one step, because we wish to determine the probability of weak Hamiltonicity in the critical window (where limiting probability is strictly between $0$ and $1$). If we were concerned only with showing that w.h.p $H_d(n, p=(d-1)! \frac{\ln n +\omega}{n^{d-1}},$ $\omega \to \infty$ is weak Hamiltonian, then we could increment in just one step as well. 

We begin with showing that for $p_1$ near but below $p$, the isolated vertices in $H_d(n, p_1)$ stay isolated in $H_d(n,p)$. Let $V_0(H)$ denote the isolated vertices of the hypergraph $H$.

\begin{lemma}
Let $c_n \to c \in \mathbb{R}$ and $p=(d-1)! \frac{\ln n + c_n}{n^{d-1}}$.  Suppose $p_1 := p - \frac{(\ln n)^3}{n^d}$. Then, w.h.p. $V_0(H_d(n, p_1))=V_0(H_d(n,p))$. In other words, the isolated vertices of $H_d(n,p_1)$ are still isolated in $H_d(n,p)$ (under the usual containment). 
\end{lemma}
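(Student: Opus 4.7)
The plan is to use a standard sprinkling coupling. Write $p = 1 - (1-p_1)(1-p^{*})$, so that $p^{*} = (p - p_1)/(1-p_1) \sim (\ln n)^3/n^d$. Realize $H_d(n,p)$ as $H_d(n,p_1) \cup H_d(n,p^{*})$, where the two hypergraphs are independent and an edge is declared present iff it appears in at least one of them. Under this coupling, $H_d(n,p_1) \subseteq H_d(n,p)$ always, and hence $V_0(H_d(n,p)) \subseteq V_0(H_d(n,p_1))$ deterministically; the task is therefore to show the reverse containment holds w.h.p., i.e., that no vertex isolated in $H_d(n,p_1)$ acquires an incident hyperedge from the ``sprinkled'' part $H_d(n,p^{*})$.

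Let $X$ count the vertices $v$ such that $v \in V_0(H_d(n,p_1)) \setminus V_0(H_d(n,p))$. A vertex $v$ contributes to $X$ precisely when (a) $v$ is isolated in $H_d(n,p_1)$, which depends only on the potential edges through $v$ in $H_d(n,p_1)$, and (b) some edge through $v$ appears in $H_d(n,p^{*})$, which depends only on the potential edges through $v$ in the independent sprinkled hypergraph. These two events are therefore independent, so
\begin{equation*}
P(v \in V_0(H_d(n,p_1)) \setminus V_0(H_d(n,p))) = (1-p_1)^{\binom{n-1}{d-1}} \left(1 - (1-p^{*})^{\binom{n-1}{d-1}}\right).
\end{equation*}

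The first factor is $\exp\bigl(-p_1 \binom{n-1}{d-1}(1+o(1))\bigr) = \exp(-\ln n - c_n + o(1)) = O(1/n)$, using $p_1 = (d-1)!\,\frac{\ln n + c_n}{n^{d-1}} - O((\ln n)^3/n^d)$ so that $p_1 \binom{n-1}{d-1} = \ln n + c_n + o(1)$. The second factor is bounded by $p^{*}\binom{n-1}{d-1} = O((\ln n)^3/n)$. Summing over the $n$ vertices,
\begin{equation*}
E[X] \leq n \cdot O(1/n) \cdot O\!\left(\frac{(\ln n)^3}{n}\right) = O\!\left(\frac{(\ln n)^3}{n}\right) \to 0,
\end{equation*}
and Markov's inequality yields $P(X \geq 1) \to 0$, which is the required statement.

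There is really no hard step here; the whole argument is a one-line first moment computation once the sprinkling is set up correctly. The only thing to be careful about is that the two events defining the contribution of $v$ to $X$ are independent under the coupling (so one can multiply the probabilities), which is exactly the reason we use the union-type decomposition $H_d(n,p) = H_d(n,p_1) \cup H_d(n,p^{*})$ rather than retaining edges of $H_d(n,p)$ with probability $p_1/p$. The gap $p - p_1 = (\ln n)^3/n^d$ is chosen precisely so that $p^{*}\binom{n-1}{d-1}$ is $o(1)$, giving just enough room to beat the factor $n$ from the union bound after multiplication by the $O(1/n)$ probability of isolation.
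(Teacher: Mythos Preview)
Your proof is correct and uses the same sprinkling coupling $H_d(n,p)=H_d(n,p_1)\cup H_d(n,p^{*})$ as the paper, followed by a first-moment argument to rule out isolated vertices of $H_d(n,p_1)$ acquiring an edge in $H_d(n,p^{*})$. The only (minor) difference is that the paper first invokes the Poisson limit lemma to bound $|V_0(H_d(n,p_1))|\le \ln n$ w.h.p.\ and then union-bounds over those at most $\ln n$ vertices, whereas you compute $P(v\text{ isolated in }H_d(n,p_1))=O(1/n)$ directly and sum over all $n$ vertices; your version is slightly more self-contained and gives the marginally sharper bound $O((\ln n)^3/n)$ versus the paper's $O((\ln n)^4/n)$.
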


\begin{proof}
We can construct $H_d(n,p)$ from independent random hypergraphs $H_d(n,p_1)$ and $H_d(n,p^*)$ with $p^*=\frac{p-p_1}{1-p_1}$, where an edge is present in $H_d(n,p)$ if and only if this edge is present in at least one of $H_d(n,p_1)$ and $H_d(n,p^*)$. This is because an edge is present in union of $H_d(n,p_1)$ and $H_d(n,p^*)$ with probability
\begin{equation*}
P(e \in H_d(n,p_1))+P(e \in H_d(n,p^*)-P(e \in H_d(n,p_1) \cap H_d(n,p^*)) = p_1+p^*-p_1 \, p^* = p.
\end{equation*}
We will find that there are not many isolated vertices of $H_d(n,p_1)$; then, we will show that it is unlikely that any of these few vertices are {\it not} isolated in $H_d(n, p^*)$. In this case, $V_0(H_d(n,p_1))=V_0(H_d(n,p))$. 

By Lemma \ref{lemma: converge in distribution}, the number of isolated vertices in $H_d(n, p_1)$ is asymptotically Poisson with mean $e^{-c}$. Hence, for any $\omega \to \infty$, w.h.p the number of isolated vertices is less than $\omega$. For instance, w.h.p. $|V_0(H_d(n,p_1))| \leq \ln n$. Further,
\begin{equation*}
p^*=\frac{p-p_1}{1-p_1} = \frac{1}{1-p_1} \frac{(\ln n)^3}{n^d} \leq  \frac{2 \,(\ln n)^3}{n^d},
\end{equation*}
for $n$ large enough. 

To prove the lemma, it suffices to show that the probability that $|V_0(H_d(n,p_1))| \leq \ln n$ and that some vertex of $V_0(H_d(n,p_1))$ is not isolated in $H_d(n,p^*)$ tends to zero.  If we break up this event across the realization of $V_0(\circ)$, this probability becomes 
\begin{equation*}
\sum_{A \subset [n], |A| \leq \ln n} P(\{V_0(H_d(n,p_1))=A \} \cap \{ A \setminus V_0(H_d(n,p^*)) \neq \emptyset \} ).
\end{equation*}
Independence of the two hypergraphs above, allows us to break up this probability as
\begin{equation}\label{eqn: 10.1.1}
\sum_{A \subset [n], |A| \leq \ln n} P(V_0(H_d(n,p_1))=A  ) \cdot P(  A \setminus V_0(H_d(n,p^*)) \neq \emptyset  ).
\end{equation}
Now we take on this latter probability. For any set of vertices $A$ with $|A| \leq \ln n$, by the union bound and symmetry, we have that
\begin{align*}
P(A \setminus V_0(H_d(n,p^*)) \neq \emptyset  ) &\leq |A| \, P(\text{generic vertex is not isolated in }H_d(n,p^*)) \\ &\leq (\ln n) \, {n-1 \choose d-1} p^* = O\left(\frac{(\ln n)^4}{n}\right).
\end{align*}
Using this bound in \eqref{eqn: 10.1.1} and ``summing" over all such sets $A$ shows that our desired probability tends to zero. 
\end{proof}

Now we begin incrementally increasing the edge probability. As before, $p_1 = p-\frac{(\ln n)^3}{n^d}$. Let $p_0= p_1 - \frac{2^{d+4}}{C} \frac{\ln n}{n^d}$, where $C=C(3^d)$ is defined in Corollary \ref{absent cd}. At each step, we increase our edge probability by roughly $\Delta p := \frac{2}{C} \frac{\ln n}{n^d}$, and we will do $k_0 = \lceil 2^{d+3} \frac{n}{\ln n} \rceil$ steps in the first run and $k_1= \lceil \ln n \rceil$ steps in the second. In particular, note that 
\begin{equation*}
p_0 < p_0+\Delta p < p_0 + 2 \Delta p < \ldots < p_0 + k_0 \Delta p < p_1
\end{equation*}
and
\begin{equation*}
p_1 < p_1+\Delta p < p_1 + 2 \Delta p < \ldots < p_1+k_1 \Delta p <p.
\end{equation*}

\begin{lemma}
$\mathbf{(i)}.$ W.h.p. there is a weak cycle in $H_d(n, p_1)$ spanning the non-isolated vertices of $H_d(n,p_0)$.

$\mathbf{(ii)}.$ W.h.p. there is a weak cycle in $H_d(n, p)$ spanning the non-isolated vertices of $H_d(n,p_1)$. 
\end{lemma}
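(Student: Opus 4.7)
Both parts proceed by the same iterative scheme, differing only in the starting edge probability and in the number of small increments. For concreteness, consider part $\mathbf{(i)}$ first: couple $H_k := H_d(n, p_0 + k \Delta p)$ for $0 \leq k \leq k_0$ so that $H_0 \subseteq H_1 \subseteq \cdots \subseteq H_{k_0}$, and let $V_1 := V_1(H_0)$ denote the non-isolated vertices of $H_0$. The plan is to maintain at each step $k$ a weak path $Q_k$ in $H_k$ with $V(Q_k) \subseteq V_1$ of length $h_k$; each successful step increases $h_k$ by one, and the iteration terminates once $Q_k$ closes into a weak cycle spanning $V_1$.

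I would begin by establishing, w.h.p., four initial properties of $H_0$: (a) $|V_1| \geq n - \ln n$ (Lemma \ref{lemma: converge in distribution}); (b) $H_0$ contains a weak path $Q_0$ of length $h_0 \geq n - 2^{d+2}n/\ln n$ whose vertices lie automatically in $V_1$ (Lemma \ref{hyper de la vega} with $\theta \sim \ln n$); (c) $u(H_0) \geq n/3^d$ (Lemma \ref{u H bound}); and (d) $V_1$ is contained in a single connected component of $H_0$. Property (d), which is essential for the extension step but not directly supplied by Section \ref{sec: nonexpanding}, is established by a first-moment computation analogous to those carried out there: the expected number of connected components of $H_0$ consisting entirely of non-isolated vertices and of size in $[2, |V_1|-1]$ tends to zero at this edge density.

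For the inductive step at $k+1$: given $Q_k$ with $h_k < |V_1| - 1$, Corollary \ref{absent cd} applied to $H_k$ yields at least $C n^d$ absent edges whose addition creates a weak cycle of length $h_k + 1$ on $V(Q_k) \subseteq V_1$; the hypothesis $u(H_k) \geq n/3^d$ follows from (c), since any $A \subseteq V_1$ non-expanding in $H_k$ would also be non-expanding in $H_0 \subseteq H_k$. With $\Delta p \sim 2 \ln n / (C n^d)$, the probability that none of these beneficial edges is revealed in $H_{k+1} \setminus H_k$ is at most $\exp(-C n^d \, \Delta p) \leq n^{-2}$. When one does appear we get a weak cycle $C_k$ on $V(Q_k)$; if $V(C_k) = V_1$ we are done, and otherwise property (d) supplies an edge in $H_0$ from $V(C_k)$ to some $v \in V_1 \setminus V(C_k)$, and opening $C_k$ at $v$ yields $Q_{k+1}$ of length $h_k + 1$. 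Union-bounding over the $k_0$ increments gives total failure $O(k_0 / n^2) = o(1)$; since $k_0 > |V_1| - 1 - h_0$, w.h.p.\ the iteration terminates in a weak cycle on $V_1$ inside $H_d(n, p_1)$. Part $\mathbf{(ii)}$ then runs the same iteration on the sequence $H_d(n, p_1 + k \Delta p)$, $0 \leq k \leq k_1 = \lceil \ln n \rceil$, initialized by opening the weak cycle on $V_1(H_d(n, p_0))$ produced by part $\mathbf{(i)}$ into a path. Because $|V_1(H_d(n, p_1)) \setminus V_1(H_d(n, p_0))| \leq \ln n$ w.h.p.\ (bounded by $d$ times the number of new edges between $p_0$ and $p_1$, whose expectation is $\Theta(\ln n)$), $k_1$ extensions suffice, and the per-step failure $n^{-2}$ again gives total failure $O(\ln n / n^2) = o(1)$.

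The main obstacle is the extension step in the case when the beneficial edge creates a weak cycle strictly smaller than the target non-isolated vertex set: one must find a vertex outside the cycle but still in the target set with a neighbor in the cycle. This relies on property (d) — connectivity of the non-isolated vertices in the starting hypergraph — which is the key structural input beyond the non-expansion lemmas of Section \ref{sec: nonexpanding} and requires its own first-moment component-count. A secondary technical point is controlling $u(H_k)$ throughout the iteration: we only use the form that subsets of the \emph{fixed} set $V_1$ which are non-expanding in $H_k$ are also non-expanding in $H_0$, which is immediate from $H_0 \subseteq H_k$, but one must be careful about subsets of $V_1(H_k) \setminus V_1$ arising from new edges during the iteration and resist the temptation to invoke monotonicity of $u$ on a changing ground set.
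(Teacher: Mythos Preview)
Your approach is essentially the same as the paper's: sprinkle edges in increments of $\Delta p$, use Corollary~\ref{absent cd} to locate $Cn^d$ beneficial absent edges at each step, bound the per-step failure by $n^{-2}$, and union-bound over $k_0$ (resp.\ $k_1$) steps. The paper likewise establishes property~(d) by a separate first-moment count (Lemma~\ref{lemma: 10.6.1} in the appendix), and it too uses the cycle from part~(i) to seed part~(ii) with a path of length $\geq n-\ln n$, so that $k_1=\lceil\ln n\rceil$ increments suffice.

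Two small technical points where the paper's execution is cleaner than your write-up. First, the paper builds $H(j)$ as the union of \emph{independent} copies $H^0\sim H_d(n,p_0)$ and $H^1,\ldots,H^{k_0}\sim H_d(n,\Delta p)$, and applies Corollary~\ref{absent cd} to the \emph{induced} hypergraph $(H(j))_{V_1(H(0))}$ on $n'\in[n-\ln n,n]$ vertices. This sidesteps exactly the difficulty you flag at the end: in your formulation $Q_k$ need not be a longest path in $H_k$ (only in $(H_k)_{V_1}$), and $u(H_k)$ is defined over $V_1(H_k)\supsetneq V_1$, so the corollary does not apply to $H_k$ as stated. Restricting to the induced hypergraph makes $Q_k$ genuinely longest there, and then $u\bigl((H(j))_{V_1}\bigr)\geq u\bigl((H(0))_{V_1}\bigr)=u(H(0))\geq n/3^d$ follows from monotonicity on the fixed ground set $V_1$. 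Second, your bound $|V_1(H_d(n,p_1))\setminus V_1(H_d(n,p_0))|\leq\ln n$ via ``$d$ times the number of new edges, expectation $\Theta(\ln n)$'' does not by itself give $\leq\ln n$ w.h.p.; the direct argument is simply that this set is contained in $V_0(H_d(n,p_0))$, which has size $\leq\ln n$ by Lemma~\ref{lemma: converge in distribution}.
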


\begin{proof}
\textbf{(i).} Let $H^0$ be distributed as $H_d(n, p_0)$ and for each $i \in \{1, 2, \ldots, k_0\}$, let $H^i$ be independent copies of $H_d(n, \Delta p)$. Now, let $H(j)$ be the random hypergraph where an edge is in $H(j)$ if and only if this edge is present in at least one of $H^0, H^1, \ldots, H^j$. By construction, we have that 
\begin{equation*}
H(0) \subset H(1) \subset H(2) \subset \ldots \subset H(k_0).
\end{equation*}
The key is that in going from $H(j)$ to $H(j+1)$, we add potential edges to $H(j)$ by looking at an independent copy of $H_d(n, \Delta p)$. Further, note that $H(k_0)$ is distributed as $H_d(n, p')$, where 
\begin{equation*}
p' = 1-(1-p_0) (1-\Delta p)^{k_0} \leq p_0+k_0 \, \Delta p \leq p_1.
\end{equation*}
Our goal here is to show that $H(k_0)$ has a weak cycle spanning the isolated vertices of $H(0)$. We will do this by showing that in moving from $H(j)$ to $H(j+1)$, it is likely that \textbf{either} we extend a longest path of $H(j)$ \textbf{or} $H(j)$ actually already has a weak cycle spanning the isolated vertices of $H(0)$. 

Before, we analyze the increment steps, let's consider some likely events in $H(0) = H_d(n,p_0)$. From the hypergraphic version of de la Vega's Theorem (Lemma \ref{hyper de la vega}), w.h.p., there is a path of length at least $n(1-\tfrac{2^{d+2}}{\ln n-\omega})$. For simplicity, note that this path is longer than $n-k_0,$ which is the lower bound that we'll use instead. In addition, as we noticed in the proof of the previous lemma, the number of vertices of degree zero is at most $ \ln n$ (see Lemma \ref{lemma: converge in distribution}). By Lemma \ref{u H bound}, w.h.p. $u(H_d(n,p_0)) \geq n/3^d$. Further, the non-isolated vertices of $H_d(n,p_0)$ form a component (we prove this fact in Lemma \ref{lemma: 10.6.1} in the appendix). Let $\mathcal{E}$ be the intersection of these ``w.h.p." events in $H(0)$. 

We introduce a couple of definitions. For a hypergraph $H$, let $V_1(H)$ denote the set of non-isolated vertices of $H$. Also, for a set of vertices $W$, let $(H)_W$ denote the subgraph of $H$ induced on $W$. We will prove that w.h.p. $(H(k))_{V_1(H(0))}$ is weak Hamiltonian. On $\mathcal{E}$, we have that $u(H(0)) \geq n/D$, and $H(0)_{V_1(H(0))}$ is connected; in this case, we have that $u( (H(j))_{V_1(H(0))} ) \geq n/D$ as well (see $u$'s definition \eqref{def of uH}).

Now let's consider the event that $V_1(H(0))=A$ and $\mathcal{E}$ occurs; so $|A| \geq n - \ln n$, since there are at most $\ln n$ isolated vertices. Define $\ell_j$ as the length of the longest path in $(H(j))_A$ if this induced hypergraph is not weak Hamiltonian and define $\ell_j=|A|$ if it is. Clearly, $\ell_0 \leq \ell_1 \leq \ldots \leq \ell_{k_0}$. At each increment step, we want to either already have a weak Hamiltonian cycle ($\ell_j=\ell_{j+1}=|A|$) or we want a longest path to be extended ($\ell_j < \ell_{j+1}$).  In the case that this latter two events don't happen, necessarily $C n^d$ (specified) edges of $H^{j+1}$ must not be present; let's see why.

\begin{enumerate}
\item Suppose $\ell_j=\ell_{j+1} < |A|-1$ (again, we are on $\{V_1(H(0))=A\} \cap \mathcal{E}$). Since $(H(0))_A$ is connected (by $\mathcal{E}$), so is $(H(j))_A$. Now if $(H(j))_A$ has a weak cycle of length of $\ell_j+1\leq |A|-1$, then since $(H(j))_A$ is connected, there is an adjacent vertex to this cycle; so, we can break apart this weak cycle and extend a supposedly longest path, contradicting its maximum length. Therefore $(H(j))_A$ can not have a weak cycle of length $\ell_j+1$. Likewise, there can not be a cycle of length $\ell_{j}+1$ in $(H(j+1))_A$. By the corollary of P\'{o}sa's Lemma (Corollary \ref{absent cd}), there are at least $C n^d$ absent edges of $(H(j))_A$ whose addition would create a weak cycle of length $\ell_j+1$. Therefore, these $C n^d$ must also be absent in $(H(j+1))_A$.
\item Now consider the event where $\ell_j=\ell_{j+1}=|A|-1$. So we have a weak Hamiltonian path in $(H(j))_A$ and $(H(j+1))_A$, but no weak Hamiltonian cycle in either. Again by Corollary \ref{absent cd}, there are $C n^d$ absent edges of $H(j)$ that must stay absent in $H(j+1)$. 
\end{enumerate}

Therefore, on the event that $\ell_j = \ell_{j+1} < |A|$, there are $C n^d$ missing edges of $H(j)$ that are still absent in $H(j+1)$; in other words, there are $C n^d$ known missing edges of $H^{j+1}$. By independence of $H^{j+1}$ from $H(j)$, we have that
\begin{equation*}
P(\{V_1(H(0))=A\} \cap \mathcal{E} \cap \{\ell_j=\ell_{j+1} < |A|\}) \leq P(\{V_1(H(0))=A\} \cap \mathcal{E}) \cdot (1-\Delta p)^{C n^d}.
\end{equation*}
Now let's take on this last factor.
\begin{equation*}
(1-\Delta p)^{C n^d} \leq \exp \left( - \Delta p \, C\, n^d \right) = 1/n^2.
\end{equation*}
Hence
\begin{equation}\label{eqn breakdown}
P(\{V_1(H(0))=A\} \cap \mathcal{E} \cap \{\ell_j=\ell_{j+1} < |A|\}) \leq \frac{P(\{V_1(H(0))=A\} \cap \mathcal{E})}{n^2}. 
\end{equation}

By the hypergraph version of de la Vega's Theorem, on $\mathcal{E}$, there is a path in $H(0)$ of length $n - k_0$. Since we run $k_0$ steps, on $\mathcal{E}$, if we do not have a weak cycle of length $|A|$ in $H(k_0),$ then necessarily at some intermediate step, $\ell_j$ is less than $|A|$ and does not increase;  formally,
\begin{equation*}
\{\ell_k < |A|, \mathcal{E} \} \subset \mathop{\cup}_{j=0}^{k-1} \{\ell_j = \ell_{j+1} < |A|, \mathcal{E}\}.
\end{equation*}

Therefore, using the union bound as well as \eqref{eqn breakdown}, we get that
\begin{align}\label{eqn: 10.1.2}
P(\{V_1(H(0))=A\} \cap \mathcal{E} \cap \{\ell_k < |A| \}) &\leq \sum_{j=0}^{k_0-1} P(\{V_1(H(0))=A\} \cap \mathcal{E} \cap \{\ell_j=\ell_{j+1}<|A|\})\nonumber \\& \leq \frac{k_0}{n^2} \, P(\{V_1(H(0))=A\} \cap \mathcal{E}).
\end{align}
By ``summing" \eqref{eqn: 10.1.2} over all such vertex sets $A$, we get that
\begin{equation*}
P(\mathcal{E} \cap \{\text{no weak cycle in }H(k)\text{ spanning }V_1(H(0))) \leq \frac{k_0}{n^2} \to 0,
\end{equation*}
which completes the proof of the first part of the lemma.

$\mathbf{(ii)}.$ The argument for this part is effectively the same as before except we start with $H_d(n,p_1)$ and do $k_1$ steps rather than start with $H_d(n, p_0)$ and run $k_0$ steps. Because of this, we omit the proof.
\end{proof}

\section{Appendix}

\begin{lemma}\label{lemma: 10.6.1}
Suppose $|\omega| \leq \ln \ln n$ and $p=(d-1)! \frac{\ln n + \omega}{n^{d-1}}$. Then w.h.p. there is only one non-trivial component in $H_d(n,p)$.
\end{lemma}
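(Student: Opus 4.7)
The plan is to apply a first-moment bound to candidate ``small'' non-trivial components. If $H_d(n,p)$ has at least two non-trivial components, then one of them, call it $S$, satisfies $d \leq |S| \leq \lfloor n/2 \rfloor$: the lower bound because a non-trivial component must contain a hyperedge, and the upper bound by taking the smaller of the two components. Such an $S$ induces a connected sub-hypergraph, and no hyperedge of $H$ has vertices on both sides of the cut $(S, [n] \setminus S)$. Writing $X$ for the number of subsets $S \subseteq [n]$ with these three properties and $|S| \in [d, \lfloor n/2 \rfloor]$, it suffices to prove $E[X] \to 0$.

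Independence of the ``internal'' and ``crossing'' potential hyperedges gives, by symmetry,
\begin{equation*}
E[X] \leq \sum_{s = d}^{\lfloor n/2 \rfloor} \binom{n}{s} \, P(H_d(s,p) \text{ is connected}) \, q^{M_s}, \quad M_s := \binom{n}{d} - \binom{s}{d} - \binom{n-s}{d}.
\end{equation*}
I would split the sum at $s^{\star} := n^{1/4}$. For the small range $s \in [d, n^{1/4}]$, apply the connectivity bound \eqref{eqn: 10.5.1} and the asymptotic $M_s \sim s \binom{n}{d-1}$ (valid since $s = o(n)$) to get $p M_s = s \ln n (1 + o(1))$. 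A calculation parallel to the proof of Lemma \ref{no small} shows that the summands form a rapidly decreasing sequence whose dominant term at $s = d$ equals $\binom{n}{d} \cdot p \cdot q^{M_d} = O(\ln n / n^{d-1}) = o(1)$.

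For the large range $s \in [n^{1/4}, \lfloor n/2 \rfloor]$, the trivial bound $P(H_d(s,p) \text{ connected}) \leq 1$ suffices: using $M_s \geq s \binom{\lceil n/2 \rceil}{d-1}$ yields $p M_s \geq 2^{-(d-1)} s \ln n (1-o(1))$, and combining this with $\ln \binom{n}{s} \leq s \ln(en/s)$ one finds that $\ln\bigl(\binom{n}{s} q^{M_s}\bigr) \leq -c_d\, s\, \ln n$ for a positive constant $c_d$ depending only on $d$, so the contribution of this range is exponentially small.

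The main subtlety lies at the small end of the sum, around $s = d$, where $\binom{n}{s} q^{M_s}$ is only of constant order; here the connectivity factor from \eqref{eqn: 10.5.1} is essential and supplies the decisive $O(\ln n / n^{d-1})$ saving. Once both factors are retained and the ratio of consecutive summands is shown to vanish uniformly on $[d, n^{1/4}]$---a calculation closely paralleling the argument in Lemma \ref{no small}---the total sum $E[X]$ is $o(1)$, proving the lemma.
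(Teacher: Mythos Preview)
Your first-moment strategy is exactly the paper's, but there is a quantitative gap in your ``large range'' estimate. The crude cut-count $M_s \ge s\binom{\lceil n/2\rceil}{d-1}$ gives only $pM_s \ge 2^{-(d-1)} s\ln n\,(1-o(1))$. At the bottom of your large range, $s=n^{1/4}$, one has $\ln\binom{n}{s}\le s\ln(en/s)\sim \tfrac34\,s\ln n$, and for every $d\ge 3$ we have $2^{-(d-1)}\le \tfrac14<\tfrac34$. Hence the inequality you assert, $\ln\bigl(\binom{n}{s}q^{M_s}\bigr)\le -c_d\,s\ln n$, is simply false near $s=n^{1/4}$: the trivial connectivity bound combined with your $M_s$ estimate leaves a \emph{positive} exponent there. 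The sum over $s\in[n^{1/4},n^{1-2^{-(d-1)}}]$ is not controlled by your argument as written.

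The paper sidesteps this by first invoking the hypergraph de~la~Vega theorem (Lemma~\ref{hyper de la vega}) to guarantee a component of size at least $n-Bn/\ln n$, so it only needs to rule out components of size $s\in[d,\,Bn/\ln n]$; on that whole range it keeps the connectivity factor \eqref{eqn: 10.5.1} and uses the correct asymptotic $M_s\sim s\binom{n-1}{d-1}$ (valid since $s=o(n)$), which gives $pM_s\sim s\ln n$ rather than $2^{-(d-1)} s\ln n$. Your approach can be repaired without de~la~Vega in either of two ways: (a) raise the split point to roughly $n/\ln n$ (or at least $n^{1-2^{-(d-1)}+\varepsilon}$) and retain the connectivity bound~\eqref{eqn: 10.5.1} throughout the lower range, or (b) on $[n^{1/4},\,\varepsilon n]$ replace your cut-count by the sharper $M_s\sim s\binom{n-1}{d-1}$, so that $pM_s\sim s\ln n$ beats $\ln\binom{n}{s}\le s\ln(en/s)\le \tfrac34 s\ln n$ there, and reserve the $2^{-(d-1)}$ bound for $s\in[\varepsilon n,\,n/2]$ where $\ln(en/s)=O(1)$.
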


\begin{proof}
By the hypergraph version of de la Vega's Theorem (Lemma \ref{hyper de la vega}), there is w.h.p. a component of size at least $n-B \frac{n}{\ln n}$ in $H_d(n,p)$ for some constant $B>0$. Therefore, it suffices to show that w.h.p. there are no components of size in $[d, B \frac{n}{\ln n}]$. We do this by showing the expected number of such components, $E_1$, tend to zero. In particular,
\begin{align*}
E_1 &= \sum_{A \subset [n], \, d \leq |A| \leq B \frac{n}{\ln n}} P(\text{component on vertex set }A) \\ &=\sum_{a=d}^{B \frac{n}{\ln n}} {n \choose a} P(\text{induced graph on }[a]\text{ is connected}, [a]\text{ is isolated from }[n] \setminus [a]).
\end{align*}
These latter no events are independent since they consider different groups of potential edges. Further, the induced hypergraph on $[a]$ is distributed as $H_d(a,p)$. Therefore,
\begin{equation*}
E_1 = \sum_{a=d}^{B \frac{n}{\ln n}} {n \choose a} P(H_d(a,p) \text{ is connected}) \, q^{{n \choose d}-{n-a \choose d}}.
\end{equation*}
Just as in \eqref{eqn: 10.5.1}, we have that
\begin{equation*}
P(H_d(a,p) \text{ is connected}) \leq \left( e \, a^{d-1} \, \frac{p}{(d-1)!} \right)^{\frac{a-1}{d-1}};
\end{equation*}
this bound holds as long as $a=o(n)$, which is definitely true in our case. Taking on the $q$-term, we see that
\begin{align*}
\log q^{{n \choose d}-{n-a \choose d}} & \leq -p \left( {n \choose d}-{n-a \choose d} \right) \\& = -p \, \frac{n^{d-1}}{(d-1)!} \left( a + O(a^2/n) \right) \leq  -a \ln n + a \ln \ln \ln n + O(a).
\end{align*}
Hence, for $n$ sufficiently large, we have that
\begin{equation*}
q^{{n \choose d}-{n-a \choose d}} \leq \exp \left( a (-\ln n + 2 \ln \ln \ln n) \right) = \left( \frac{(\ln \ln n)^2}{n} \right)^a.
\end{equation*}
Consequently, we have that
\begin{align*}
E_1 &\leq \sum_{a=d}^{B \frac{n}{\ln n}} \left( \frac{e \, n}{a} \right)^a \left(e \, a^{d-1} \frac{p}{(d-1)!} \right)^{\frac{a-1}{d-1}} \left( \frac{(\ln \ln n)^2}{n} \right)^a \\& = \sum_a \frac{1}{a} \left( e \, (\ln \ln n)^2 \right)^a \left( e \frac{p}{(d-1)!} \right)^\frac{a-1}{d-1}=: \sum \gamma_a.
\end{align*}
To show that this latter sum tends to zero, it suffices to show that $\gamma_d$ tends to zero and that the ratios of consecutive terms also (uniformly) tend to zero.

First, note that 
\begin{equation*}
\frac{\gamma_{a+1}}{\gamma_a} = \frac{a}{a+1} \left( e \, (\ln \ln n)^2 \right) \left(e \, \frac{p}{(d-1)!} \right)^{1/(d-1)} = O\left( \frac{ (\ln \ln n)^2 (\ln n)^{1/(d-1)}}{n} \right) \to 0;
\end{equation*}
and further that
\begin{equation*}
\gamma_{d} = O\left( (\ln \ln n)^{2d} \, \frac{\ln n}{n^{d-1}} \right) \to 0.
\end{equation*}

\end{proof}

\begin{remark}
A consequence of this lemma, along with Lemma \ref{lemma: converge in distribution}, is that the probability that $H_d(n,p)$, $p=(d-1)! \frac{\ln n + c}{n^{d-1}}$, is connected tends to $e^{-e^{-c}}$. 
\end{remark}

\addcontentsline{toc}{section}{References}

\end{document}